\documentclass[oneside,english]{amsart}
\usepackage[T1]{fontenc}
\usepackage[latin9]{inputenc}
\usepackage{geometry}
\geometry{verbose,tmargin=3cm,bmargin=3cm,lmargin=3cm,rmargin=3cm}
\pagestyle{empty}
\usepackage{amsthm}
\usepackage{amssymb}

\makeatletter
\numberwithin{equation}{section}
\numberwithin{figure}{section}
\theoremstyle{plain}
\newtheorem{thm}{\protect\theoremname}[section]
  \theoremstyle{plain}
  \newtheorem{assumption}[thm]{\protect\assumptionname}
  \theoremstyle{remark}
  \newtheorem{rem}[thm]{\protect\remarkname}
  \theoremstyle{plain}
  \newtheorem{lem}[thm]{\protect\lemmaname}
  \theoremstyle{plain}
  \newtheorem{cor}[thm]{\protect\corollaryname}
  \theoremstyle{definition}
  \newtheorem{defn}[thm]{\protect\definitionname}
  \theoremstyle{definition}
  \newtheorem{example}[thm]{\protect\examplename}
  \theoremstyle{remark}
  \newtheorem*{acknowledgement*}{\protect\acknowledgementname}

\makeatother

\usepackage{babel}
  \providecommand{\acknowledgementname}{Acknowledgement}
  \providecommand{\assumptionname}{Assumption}
  \providecommand{\corollaryname}{Corollary}
  \providecommand{\definitionname}{Definition}
  \providecommand{\examplename}{Example}
  \providecommand{\lemmaname}{Lemma}
  \providecommand{\remarkname}{Remark}
\providecommand{\theoremname}{Theorem}

\begin{document}

\title{Tight Bounds for Averaging Multi-Frequency Differential Inclusions,
Applied to Control Systems}

\author{Ido Bright}

\address{Department of Applied Mathematics, University of Washington, Seattle,
Washington, USA.}

\email{ibright@uw.edu}

\maketitle
\global\long\def\seq#1{#1_{1},#1_{2},\dots}

\global\long\def\seqm#1{#1_{1},#1_{2},\dots,#1_{m}}

\global\long\def\seqf#1{#1_{1}\left(\cdot\right),#1_{2}\left(\cdot\right),\dots}

\global\long\def\seqfn#1#2{#1_{1}\left(\cdot\right),#1_{2}\left(\cdot\right),\dots,#1_{#2}\left(\cdot\right)}

\global\long\def\seqn#1#2{#1_{1},#1_{2},\dots,#1_{#2}}

\global\long\def\de{\Delta\left(\epsilon\right)}

\global\long\def\dej{\Delta_{j}\left(\epsilon\right)}

\global\long\def\ee{\eta\left(\epsilon\right)}

\global\long\def\smj{\sum_{j=1}^{m}}

\global\long\def\eej{\eta_{j}\left(\epsilon\right)}

\global\long\def\e{\epsilon}

\global\long\def\D{\Omega}

\global\long\def\pd{\left(\cdot\right)}

\global\long\def\ze{\left[0,\e^{-1}\right]}

\global\long\def\S#1{S_{\ze}\left(\e#1,x_{0}\right)}

\global\long\def\Ss#1{S_{\ze}\left(#1,x_{0}\right)}

\global\long\def\SF{\S F}

\global\long\def\SFT{\S{F_{T}}}

\global\long\def\SFB{\S{\bar{F}}}

\global\long\def\iT{\left[iT,\left(i+1\right)T\right)}

\global\long\def\iTj{\left[iT_{j},\left(i+1\right)T_{j}\right)}

\global\long\def\Fxt{F\left(t,x\right)}

\global\long\def\x{x^{*}\left(\cdot\right)}

\global\long\def\xx#1{x^{*}\left(#1\right)}

\global\long\def\dxx#1{\dot{x}^{*}\left(#1\right)}

\global\long\def\y{y^{*}\left(\cdot\right)}

\global\long\def\fxt{f\left(t,x,u\right)}

\global\long\def\t{\tau}
\global\long\def\tn{\tau_{n}}
\global\long\def\ti{\tau_{i}}
\global\long\def\tii{\tau_{i+1}}

\global\long\def\Fj{F_{j}\left(t,x\right)}

\global\long\def\R{\mathbb{R}}

\global\long\def\Rd{\mathbb{R}^{d}}

\global\long\def\Fbx{\overline{F}\left(x\right)}

\global\long\def\jm{j=1,\dots,m}

\global\long\def\Fx#1{F\left(#1,x\right)}

\global\long\def\Fb#1{\bar{F}\left(#1\right)}

\global\long\def\rinf#1{#1\rightarrow\infty}

\global\long\def\linf#1{\lim_{#1\rightarrow\infty}}

\global\long\def\liinf#1{\liminf_{#1\rightarrow\infty}}

\global\long\def\lsinf#1{\limsup_{#1\rightarrow\infty}}

\begin{abstract}
We present new tight bounds for averaging differential inclusions,
which we apply to multi-frequency inclusions consisting of a sum of
time periodic set-valued mappings. For this family of inclusions we
establish an a tight estimate of order $O\left(\epsilon\right)$ on
the approximation error. These results are then applied to control
systems consisting of a sum of time-periodic functions.
\end{abstract}

\section{Introduction}

The averaging of differential inclusions seeks to approximate the
solution-set of a time varying differential inclusion with small amplitude
(or, equivalently by change of variable, a highly oscillatory systems),
by the solution of the auxiliary \textit{averaged} autonomous differential
inclusion, in a finite but large time domain. The averaged inclusion
is obtained by computing the time average of the set-valued mapping.
As a time-independent inclusion it is amenable to analysis, and applications
of averaging in stabilization and optimality can be found in Gama
and Smirnov \cite{gama2013stability}. In this paper we focus on estimating
the difference, in the Hausdorff distance, between the solution sets
of both systems. 

We consider the quantitative aspect of the approximation of $\SF$,
the solution-set in $\ze$ of the differential inclusion
\begin{equation}
\dot{x}\in\e\Fxt,\; x\left(0\right)=x_{0},\label{eq:diffinc}
\end{equation}
where we focus on the case where
\[
\Fxt=F_{1}\left(\omega_{1}t,x\right)+\cdots+F_{m}\left(\omega_{m}t,x\right)
\]
and each $\Fj$ is periodic in $t$ with period 1. The solution set
is approximated by $\SFB$ the solution-set of the averaged differential
inclusion 
\begin{equation}
\dot{y}\in\epsilon\bar{F}\left(y\right),\; y\left(0\right)=x_{0}\label{eq:avginc}
\end{equation}
in $\ze$, where

\begin{equation}
\bar{F}(x)=\lim_{T\rightarrow\infty}\frac{1}{T}\int_{0}^{T}F(s,x)ds.\label{eq:Average def}
\end{equation}
The integral considered is the Aumann integral (see, Aumann \cite{aumann1965integrals}),
and the convergence is in the Hausdoeff distance.

Our main result establishes an $O\left(\e\right)$ estimation of the
approximation error, i.e., the Housdorff distance between the solution
sets $\SF$ and $\SFB$. Namely, for every solution of \eqref{eq:diffinc}
there is a solution of \eqref{eq:avginc} which is $O\left(\e\right)$
close to it, and vice-versa. This result extends the classical bound
of $O\left(\e\right)$ for time periodic inclusion ($m=1$) by Plotnikov
\cite{plotnikov1979averaging} to multi-frequency inclusions. We also
establish the same tight estimate when each coordinate of $\Fxt$
is periodic with a different period, and provide new estimates for
general ,non-periodic, differential inclusions.

These results are applied to the averaging of control systems of the
form
\begin{equation}
\dot{x}=\e g\left(t,x,u\right),\ \ x\left(0\right)=x_{0}\label{eq:control_sys}
\end{equation}
where
\[
g\left(t,x,u\right)=g_{1}\left(\omega_{1}t,x,u\right)+\cdots+g_{m}\left(\omega_{m}t,x,u\right)
\]
and each $g_{i}\left(t,x,u\right)$ is periodic in $t$ with period
$1$. The difficulty in this setting lies in the fact that same control
appears in all terms, and a non-trivial extension of our results is
presented in Section \ref{sec:Application-in-multi-frequency}. The
averaged equation corresponds to the chattering limit
\begin{equation}
\dot{y}\in\e\bar{G}(y)=\lim_{T\rightarrow\infty}\frac{\e}{T}\int_{0}^{T}\left\{ g(s,y,u)|u\in U\right\} ds,\label{eq:cnt_avg-1}
\end{equation}
and not to the trivial time average. An equivalent definition of $\bar{G}\left(y\right)$,
which replaces the time average by a space average, can be obtained
when all the $g_{j}\left(t,x,u\right)$ are continuous in $t$ and
the set of frequencies $\omega_{1},\omega_{2},\dots,\omega_{m}$ is
linearly independent over the integers. Then 
\begin{equation}
\bar{G}\left(y\right)=\left\{ \int_{\left[0,1\right]^{m}}\smj g_{j}\left(\phi_{j},y,u\left(\phi\right)\right)d\phi|u:\left[0,1\right]^{m}\rightarrow U\mbox{ is measurable}\right\} ,\label{eq:cont_avg_meas_fnc}
\end{equation}
where $\phi=\left(\phi_{1},\phi_{2},\dots,\phi_{m}\right)\in\left[0,1\right]^{m}$.

We establish that the approximation error for this multi-frequency
system is $O\left(\e\right)$, extending the result in \cite{plotnikov1979averaging}
as well as that by Bombrun and Pomet \cite[Theorem 3.7]{bombrun2013averaged}
for linear systems. This bound also improves a previous bound of order
$O\left(\sqrt{\e}\right)$ presented by Artstein, for the case $m>1$.

Applying a change of variables $\tau=\e^{-1}t$ Equations \eqref{eq:diffinc}
and \eqref{eq:avginc} reduce to 
\[
x'\in F\left(\tau/\e,x\right),\ y'\in\bar{F}\left(y\right)\ x\left(0\right)=y\left(0\right)=x_{0},
\]
and Equations \eqref{eq:control_sys} and \eqref{eq:cnt_avg-1} to
\[
x'=g\left(\tau/\e,x,u\right),\ y'\in\bar{G}\left(y\right)\ x\left(0\right)=y\left(0\right)=x_{0}.
\]
With this change of variable, our bounds hold in the time interval
$\left[0,1\right]$.

Averaging differential inclusions generalizes the classical averaging
method of averaging differential equations. For a reference on the
averaging method of ordinary differential equations, the reader is
referred to the book of Sanders, Verhulst and Morduck \cite{sanders2007averaging}
and to works of Artstein \cite{artstein2007averaging} and Bright
\cite{bright2009tight,bright2011moving} for a modern treaty and improved
estimates on the error, the line of which we follow in this paper.
For a reference to results in differential inclusions refer to the
review papers Klymchuk, Plotnikov and Skripnik \cite{klymchuk2012overview}
and to \cite{gama2013stability}.

The estimates presented in this paper are one of the first qualitative
results for averaging differential inclusions. We believe that the
methods presented may also be applied to quantitate analysis of averaging
singularly perturbed differential inclusions and control systems,
where quantitative bounds are sparse. For a review on averaging in
singularly perturbed control systems see Artstein \cite{artsteinanalysis}
and the references within.

The structure of this paper is as follows: Section \ref{sec:Notations-and-Assumptions}
presents the assumption and notations used throughout this paper.
Section \ref{sec:Key-Lemma} presents our key lemma, which estimates
the effect of averaging over a finite interval on the solution sets.
 In Section \ref{sec:Averaging-differential-Inclusion} results averaging
results of differential inclusions are presented, and in the last
section they are applied to control systems.

\section{\label{sec:Notations-and-Assumptions}Notations and Assumptions}

In what follows, we use the following notions. We denote the d-dimensional
Euclidean space by $\R^{d}$, a vector by $x\in\R^{d}$ and its Euclidean
norm by $\left|x\right|$. The Euclidean ball centered at $x$ with
radius $r>0$ is denoted by $B\left(x,r\right)\subset\Rd$. Given
two sets $A_{1},A_{2}\subset\R^{d}$ their Minkovski sum is denoted
by $A_{1}+A_{2}=\left\{ x_{1}+x_{2}|x_{1}\in A_{1},x_{2}\in A_{2}\right\} $.
We endow the set of continuous function by supremum norm, defined
by $\left\Vert y\pd\right\Vert =\sup_{t}\left|y\left(t\right)\right|$.
Given a normed vector space $\left(\mathcal{X},\left\Vert \cdot\right\Vert \right)$
the distance between a point $x\in\mathcal{X}$ and a set $A\subset\mathcal{X}$
is denoted by $d\left(x,A\right)=\inf\left\{ \left\Vert x-y\right\Vert |y\in A\right\} $
and the Hausdorff distance between two sets $A_{1},A_{2}\in\mathcal{X}$
by 
\[
d_{H}\left(A_{1},A_{2}\right)=\max\left\{ \sup\left\{ d\left(y,A_{2}\right)|y\in A_{1}\right\} ,\sup\left\{ d\left(y,A_{1}\right)|y\in A_{2}\right\} \right\} .
\]

We define the support function of a convex set $D\subset\Rd$ by $h_{D}\left(x\right)=\sup_{y\in D}x\cdot y$
for every $x\in\partial B\left(\mathbf{0},1\right)$. When $D_{1},D_{2}\subset\Rd$
are convex their Hausdorff distance (see, Schneider \cite[Theorem 1.8.11]{schneider1993convex})
is given by
\[
d_{H}\left(D_{1},D_{2}\right)=\left\Vert h_{D_{1}}\pd-h_{D_{2}}\pd\right\Vert .
\]

We use the notation $\dot{x}\left(t\right)=\frac{d}{dt}x\left(t\right)$
for the time derivative. The solution-set of the differential inclusion
$\dot{x}\in G\left(t,x\right)$ with initial condition $x\left(0\right)=x_{0}$
and in the domain $\left[0,T\right]$, is denoted by $S_{\left[0,T\right]}\left(G,x_{0}\right)$. 

We consider solutions of differential equations of the form $\dot{x}\in\e\Fxt$
in the domain $\D\subset\Rd$, satisfying the following conditions.
\begin{assumption}
\label{ass:Averaging condition}The set-valued mapping $F(x,t):\R\times\D\rightrightarrows\mathbb{R}^{d}$
satisfies the following conditions: \end{assumption}
\begin{enumerate}
\item The values of $\Fxt$ are non-empty, closed and convex in its domain.
\item For every $t,x$ in its domain $\Fxt\subset B\left(0,M\right)$.
\item $\Fxt$ is measurable in $t$.
\item $\Fxt$ is uniformly Lipschitz continuous in $x$ with a Lipschitz
constant $K$, namely,
\[
d_{H}\left(F\left(t,x_{1}\right),F\left(t,x_{2}\right)\right)\le K\left|x_{1}-x_{2}\right|
\]
for all $t\in\R$, $x_{1},x_{2}\in\D$.
\item The time average function $\bar{F}\left(x\right)$, defined in \eqref{eq:Average def},
exists.
\end{enumerate}
Throughout this paper we assume the following assumption on the solutions
of \eqref{eq:diffinc} and \eqref{eq:avginc}.
\begin{assumption}
All the solutions of \eqref{eq:diffinc} and \eqref{eq:avginc} are
contained in $\D$.
\end{assumption}
Without loss of generality and to simplify our proofs, we shall assume
that $\Fxt$ satisfies the conditions of Assumption \ref{ass:Averaging condition}
for $\D=\Rd$.

Our assumptions can be relaxed in the following manner.
\begin{rem}
\label{rem: non-cnv}The requirement that $\Fxt$ is convex valued
can be relaxed. Indeed, by Filippov theorem the solution set of $\Fxt$
is dense in the solution set of the inclusion obtained by replacing
$\Fxt$ by its convex hull. Moreover, the average of both set-valued
mappings are equal.
\end{rem}

\begin{rem}
The Lipschitz regularity of $\Fxt$ can be relaxed so that the Lipschitz
constant, $k\left(t\right)$, depends on $t$. In this case the same
results hold as long as there exists $K$ so that $\e\int_{0}^{T}k\left(s\right)ds\le K$
holds for every $T\in\ze$.
\end{rem}

\begin{rem}
The solutions are presented in a finite dimensional Euclidean space,
however, they hold for Banach-valued differential inclusions as well. 
\end{rem}
The following lemma easily follows from the assumptions above.
\begin{lem}
If $\Fxt$ satisfies Assumption \ref{ass:Averaging condition} then
so does $ $$\bar{F}(x)$.
\end{lem}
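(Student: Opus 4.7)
The plan is to verify each of the five items of Assumption~\ref{ass:Averaging condition} for $\bar F(x)$ in turn, using the corresponding item for $F(t,x)$ together with the representation of the Aumann integral through support functions. Throughout, I will exploit that for $F(t,x)$ convex-valued, measurable and bounded, the Aumann integral $\frac{1}{T}\int_{0}^{T}F(s,x)\,ds$ is non-empty, closed and convex, and its support function equals the integral of the support function of $F(s,x)$; from this, a Hausdorff-distance estimate
\[
d_{H}\!\left(\tfrac{1}{T}\!\int_{0}^{T}\!F(s,x_{1})\,ds,\;\tfrac{1}{T}\!\int_{0}^{T}\!F(s,x_{2})\,ds\right)\le\tfrac{1}{T}\!\int_{0}^{T}d_{H}(F(s,x_{1}),F(s,x_{2}))\,ds
\]
follows by taking the supremum of $|h_{F(s,x_{1})}-h_{F(s,x_{2})}|$ over the unit sphere.

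Properties (1) and (2) are then essentially automatic. For each fixed $T>0$, the set $\frac{1}{T}\int_{0}^{T}F(s,x)\,ds$ is non-empty, closed and convex, and since $F(s,x)\subset B(0,M)$ with $B(0,M)$ convex, the averaged set also lies in $B(0,M)$. Property~(5) of Assumption~\ref{ass:Averaging condition} supplies convergence in Hausdorff distance to $\bar F(x)$; the class of non-empty closed convex subsets of $B(0,M)$ is closed under Hausdorff limits, so $\bar F(x)$ inherits these properties and the bound $\bar F(x)\subset B(0,M)$. Property~(3), measurability in $t$, is trivial because $\bar F$ does not depend on $t$.

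For property~(4), I would use the Lipschitz bound on $F(t,\cdot)$ inside the estimate above to obtain, for each $T$,
\[
d_{H}\!\left(\tfrac{1}{T}\!\int_{0}^{T}\!F(s,x_{1})\,ds,\;\tfrac{1}{T}\!\int_{0}^{T}\!F(s,x_{2})\,ds\right)\le K\,|x_{1}-x_{2}|.
\]
Since $d_{H}$ is continuous with respect to Hausdorff convergence, passing to the limit $T\to\infty$ on both arguments yields $d_{H}(\bar F(x_{1}),\bar F(x_{2}))\le K|x_{1}-x_{2}|$. Finally, property~(5) for $\bar F$ is immediate, since $\bar F$ is time-independent and hence equals its own time average.

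The main (and really the only) obstacle is justifying the commuted inequality between Hausdorff distance and averaging of set-valued maps; once one records the support-function identity for the Aumann integral of a measurable, convex-valued, bounded, Lipschitz map, every remaining step reduces to a routine passage to the limit and the straightforward observation that the closed convex subsets of a fixed ball form a closed subset of the Hausdorff metric space.
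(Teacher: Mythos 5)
Your proof is correct: the support-function identity for the Aumann integral gives the commuted Hausdorff-distance estimate, and all five properties (non-emptiness, closedness, convexity, the bound $M$, the Lipschitz constant $K$, triviality of measurability and of the average for a time-independent map) pass to the Hausdorff limit exactly as you describe. The paper offers no proof of this lemma, stating only that it ``easily follows from the assumptions,'' and your argument is precisely the routine verification that is being left to the reader, so there is nothing to contrast.
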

Assumption \ref{ass:Averaging condition} implies the existence of
Filippov solutions to both \eqref{eq:diffinc} and \eqref{eq:avginc}
in any finite time interval, as well as the validity of the Filippov-Gronwall
inequality stated below.
\begin{thm}
\label{thm:Filippov lemma}Let $\Fxt$ satisfy the conditions of Assumption
\ref{ass:Averaging condition} and $y:\left[0,T\right]\rightarrow\D$
be an absolutely continuous function satisfying $y\left(0\right)=x_{0}$.
There exist a solution $x^{*}\left(\cdot\right)$ of \eqref{eq:diffinc}
such that 
\[
\sup_{t\in\left[0,T\right]}\left|x^{*}\left(t\right)-y\left(t\right)\right|\le e^{\epsilon KT}\int_{0}^{T}d\left(\dot{y}\left(s\right),\e F\left(s,y\left(s\right)\right)\right)ds.
\]

\end{thm}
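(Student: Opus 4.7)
The plan is to mimic the classical Filippov successive-approximation scheme, scaled by $\e$ to match \eqref{eq:diffinc}. First I would build inductively a sequence $\seq x$ of absolutely continuous functions on $\left[0,T\right]$ with $x_{0}\pd=y\pd$, $x_{n}\left(0\right)=x_{0}$, and derivatives $\dot{x}_{n+1}\left(t\right)\in\e F\left(t,x_{n}\left(t\right)\right)$ chosen measurably so as to minimise $\left|v-\dot{x}_{n}\left(t\right)\right|$ over $v\in\e F\left(t,x_{n}\left(t\right)\right)$. In particular the selection achieves $\left|\dot{x}_{n+1}\left(t\right)-\dot{x}_{n}\left(t\right)\right|=d\left(\dot{x}_{n}\left(t\right),\e F\left(t,x_{n}\left(t\right)\right)\right)$ for almost every $t$.

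Next, writing $r\left(t\right)=d\left(\dot{y}\left(t\right),\e F\left(t,y\left(t\right)\right)\right)$ and $w\left(T\right)=\int_{0}^{T}r\left(s\right)ds$, I would obtain $\left|x_{1}\left(t\right)-y\left(t\right)\right|\le w\left(T\right)$ at the base step, and for $n\ge1$ combine the minimising choice with the Lipschitz assumption to deduce
\[
\left|\dot{x}_{n+1}\left(t\right)-\dot{x}_{n}\left(t\right)\right|\le d_{H}\left(\e F\left(t,x_{n-1}\left(t\right)\right),\e F\left(t,x_{n}\left(t\right)\right)\right)\le\e K\left|x_{n}\left(t\right)-x_{n-1}\left(t\right)\right|.
\]
An induction on $n$ then gives $\left|x_{n+1}\left(t\right)-x_{n}\left(t\right)\right|\le w\left(T\right)\left(\e Kt\right)^{n}/n!$. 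Summing in $n$, the partial sums form a Cauchy sequence in the supremum norm, so $\seq x$ converges uniformly to some $\x$ with $\left|\xx{t}-y\left(t\right)\right|\le w\left(T\right)\sum_{n=0}^{\infty}\left(\e KT\right)^{n}/n!=e^{\e KT}w\left(T\right)$, which is the claimed inequality.

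Finally, I would verify that $\x$ is an admissible Filippov solution of \eqref{eq:diffinc}. The derivatives are Cauchy in $L^{1}\left(\left[0,T\right];\Rd\right)$ by the same geometric estimate, so, along a subsequence, $\dot{x}_{n}$ converges almost everywhere to a function which, by passing to the limit in $x_{n}\left(t\right)=x_{0}+\int_{0}^{t}\dot{x}_{n}\left(s\right)ds$, is identified with $\dxx{\cdot}$. Since $\dot{x}_{n+1}\left(t\right)\in\e F\left(t,x_{n}\left(t\right)\right)$ and $d_{H}\left(\e F\left(t,x_{n}\left(t\right)\right),\e F\left(t,\xx{t}\right)\right)\le\e K\left|x_{n}\left(t\right)-\xx{t}\right|\to0$ by Lipschitz continuity, closedness of $\e F\left(t,\xx{t}\right)$ forces $\dxx{t}\in\e F\left(t,\xx{t}\right)$ almost everywhere. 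The main technical hurdle is producing the measurable minimising selection at every stage, which I expect to justify by invoking a Filippov-type selection theorem for the set-valued map $t\mapsto\left\{ v\in\e F\left(t,x_{n}\left(t\right)\right):\left|v-\dot{x}_{n}\left(t\right)\right|=d\left(\dot{x}_{n}\left(t\right),\e F\left(t,x_{n}\left(t\right)\right)\right)\right\}$ using the convexity, closedness, measurability in $t$ and Lipschitz dependence on $x$ from Assumption \ref{ass:Averaging condition}; once this is in place the rest of the argument is routine exponential-series bookkeeping.
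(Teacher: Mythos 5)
Your successive-approximation argument is the standard proof of Filippov's theorem and it is essentially correct; note, though, that the paper does not prove Theorem \ref{thm:Filippov lemma} at all --- it is invoked as the classical Filippov--Gronwall inequality, so there is no internal proof to compare against. Two remarks on your write-up. First, the measurable-selection step you single out as the main technical hurdle is simpler than you suggest: since the values of $\e F\left(t,x_{n}\left(t\right)\right)$ are closed and convex, the point nearest to $\dot{x}_{n}\left(t\right)$ is unique, so your selection is just the metric projection of the measurable function $\dot{x}_{n}$ onto the measurable set-valued map $t\mapsto\e F\left(t,x_{n}\left(t\right)\right)$ (measurable because $F$ is measurable in $t$, Lipschitz in $x$, and $x_{n}$ is continuous), and its measurability follows from standard selection results; no Filippov-type implicit-measurable-selection theorem is needed. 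Second, the quantitative bookkeeping is sound: the base estimate $\left|\dot{x}_{1}\left(t\right)-\dot{y}\left(t\right)\right|=d\left(\dot{y}\left(t\right),\e F\left(t,y\left(t\right)\right)\right)$, the recursion $\left|\dot{x}_{n+1}\left(t\right)-\dot{x}_{n}\left(t\right)\right|\le\e K\left|x_{n}\left(t\right)-x_{n-1}\left(t\right)\right|$, and the induction $\left|x_{n+1}\left(t\right)-x_{n}\left(t\right)\right|\le w\left(T\right)\left(\e Kt\right)^{n}/n!$ sum to exactly the constant $e^{\e KT}w\left(T\right)$ claimed in the statement, and the identification of the uniform limit as a solution (via $L^{1}$-convergence of the derivatives, a.e. convergence along a subsequence, the estimate $d\left(\dot{x}^{*}\left(t\right),\e F\left(t,x^{*}\left(t\right)\right)\right)\le\left|\dot{x}^{*}\left(t\right)-\dot{x}_{n+1}\left(t\right)\right|+\e K\left|x_{n}\left(t\right)-x^{*}\left(t\right)\right|$, and closedness of the values) is correct; the standing convention $\D=\Rd$ in the paper removes any worry about the iterates leaving the domain.
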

We shall use the following corollary.
\begin{cor}
\label{cor:filippov}Suppose $F_{1}\left(t,x\right)$ and $ $$F_{2}\left(t,x\right)$
satisfy the conditions of Assumption \ref{ass:Averaging condition}
for $\D=\Rd$, and that $d_{H}\left(F_{1}\left(t,x\right),F_{2}\left(t,x\right)\right)<\eta$
holds for for every $t>0$ and $ $ $x\in\D$ then
\[
d_{H}\left(\S{F_{1}},\S{F_{2}}\right)<e^{K}\eta.
\]
\end{cor}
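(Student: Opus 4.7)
The plan is to deduce this corollary directly from the Filippov--Gronwall inequality (Theorem \ref{thm:Filippov lemma}) applied to each solution of one inclusion as an approximate solution of the other. The two ingredients to balance are (i) that the pointwise distance between the right-hand sides is at most $\epsilon\eta$ after multiplying by $\epsilon$, and (ii) that on the rescaled horizon $T=\epsilon^{-1}$ the Gronwall exponential collapses to $e^{K}$ and the time integral contributes exactly the factor $\epsilon^{-1}$ needed to cancel an $\epsilon$.

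In detail, I would proceed as follows. Fix any $y(\cdot)\in S_{\ze}(\epsilon F_{1},x_{0})$. Then $y$ is absolutely continuous with $y(0)=x_{0}$ and $\dot{y}(t)\in\epsilon F_{1}(t,y(t))$ for a.e.\ $t\in\ze$. Using the hypothesis $d_{H}(F_{1}(t,x),F_{2}(t,x))<\eta$ at the point $x=y(t)$, I get
\[
d\bigl(\dot{y}(t),\,\epsilon F_{2}(t,y(t))\bigr)\le\epsilon\,d_{H}\bigl(F_{1}(t,y(t)),F_{2}(t,y(t))\bigr)<\epsilon\eta
\]
for a.e.\ $t\in\ze$. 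Applying Theorem \ref{thm:Filippov lemma} to the inclusion $\dot{x}\in\epsilon F_{2}(t,x)$ with $T=\epsilon^{-1}$, I obtain a solution $x^{*}(\cdot)\in S_{\ze}(\epsilon F_{2},x_{0})$ with
\[
\sup_{t\in\ze}\bigl|x^{*}(t)-y(t)\bigr|\le e^{\epsilon K\epsilon^{-1}}\int_{0}^{\epsilon^{-1}}d\bigl(\dot{y}(s),\epsilon F_{2}(s,y(s))\bigr)\,ds<e^{K}\cdot\epsilon^{-1}\cdot\epsilon\eta=e^{K}\eta.
\]
This shows every $y\in S_{\ze}(\epsilon F_{1},x_{0})$ lies within supremum distance $e^{K}\eta$ of some element of $S_{\ze}(\epsilon F_{2},x_{0})$.

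Interchanging the roles of $F_{1}$ and $F_{2}$ (the hypothesis is symmetric), the same argument gives the reverse inclusion, so both one-sided Hausdorff distances are bounded by $e^{K}\eta$, yielding $d_{H}(\S{F_{1}},\S{F_{2}})<e^{K}\eta$. There is no real obstacle here; the only thing to notice is the bookkeeping on the scaling: the horizon $\epsilon^{-1}$, the Lipschitz constant $\epsilon K$ of the scaled right-hand side, and the pointwise gap $\epsilon\eta$ conspire to produce exactly $e^{K}\eta$ independent of $\epsilon$, which is precisely why the statement is useful in the averaging context that follows.
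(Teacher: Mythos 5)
Your proposal is correct and follows essentially the same argument as the paper: take an arbitrary solution of the first inclusion, bound $d\left(\dot{y}\left(t\right),\e F_{2}\left(t,y\left(t\right)\right)\right)$ by $\e\eta$ via the Hausdorff hypothesis, and apply the Filippov--Gronwall inequality on $\ze$ so that the factors $e^{\e K\e^{-1}}=e^{K}$ and $\e^{-1}\cdot\e\eta$ combine to $e^{K}\eta$, with symmetry giving the other direction. No differences worth noting.
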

\begin{proof}
Let $x_{1}^{*}\left(\cdot\right)$ be an arbitrary solution of $\dot{x}_{1}\in\e F_{1}\left(t,x_{1}\right)$
defined in $\ze$. Then by the Filippov-Gronwall inequality there
exists a solution $x_{2}^{*}\pd$ of $\dot{x}_{2}\in\e F_{2}\left(t,x_{2}\right)$
satisfying $x_{1}^{*}\left(0\right)=x_{2}^{*}\left(0\right)$ and
\begin{eqnarray*}
\sup_{t\in\ze}\left|x_{1}^{*}\left(t\right)-x_{2}^{*}\left(t\right)\right| & \le & e^{K}\int_{0}^{\e^{-1}}d\left(\dot{x}_{1}^{*}\left(s\right),\e F_{2}\left(s,x_{1}^{*}\left(s\right)\right)\right)ds\\
 & \le & e^{K}\int_{0}^{\e^{-1}}\e d_{H}\left(F_{1}\left(s,x_{1}^{*}\left(s\right)\right),F_{2}\left(s,x_{1}^{*}\left(s\right)\right)\right)ds\le e^{K}\int_{0}^{\e^{-1}}\e\eta ds=e^{K}\eta.
\end{eqnarray*}
The other direction is equivalent.
\end{proof}

\section{\label{sec:Key-Lemma}Key Lemma}

In this section we study the effect of a finite-time averaging, or
partial average, on the solution-set of a differential inclusion.
The bound we obtain is used in the following section to estimate the
averaging approximation error, where we apply  such finite-time averages
in a sequential manner. 
\begin{defn}
Given a set-valued mapping $\Fx t$ and $T>0$ we define
\[
F_{T}\left(t,x\right)=\frac{1}{T}\int_{0}^{T}F\left(t+s,x\right)ds.
\]

\end{defn}
Notice that when $\Fxt$ is periodic in $t$ with period $T$ then
$ $$F_{T}\left(t,x\right)=\Fb x$.

We shall denote by $\SFT$ the  solution set of the equation
\begin{equation}
\dot{z}\in\e F_{T}\left(t,z\right),\ \ z\left(0\right)=x_{0}\label{eq:T-inc}
\end{equation}
in $\ze$.

The following lemma can be easily verified.
\begin{lem}
If $\Fxt$ satisfies Assumption \ref{ass:Averaging condition} then
so does $F_{T}\left(t,x\right)$, for every $T>0$.
\end{lem}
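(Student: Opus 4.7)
The plan is to verify each of the five conditions of Assumption \ref{ass:Averaging condition} for $F_T$ in turn, inheriting each from the corresponding property of $F$. The central tool is the support-function identity for Aumann integrals, $h_{F_T(t,x)}(v) = \frac{1}{T}\int_0^T h_{F(t+s,x)}(v)\,ds$, combined with the Hausdorff-distance formula $d_H(A,B) = \sup_{|v|=1}|h_A(v) - h_B(v)|$ for convex sets that was recalled in Section \ref{sec:Notations-and-Assumptions}.

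For properties (1)--(3), I would argue that $F_T(t,x)$ is non-empty, convex, closed and contained in $B(0,M)$ by standard facts about Aumann integrals of integrably bounded, measurable, convex-valued multifunctions in $\Rd$, using convexity of $B(0,M)$ for the uniform norm bound; and that $t \mapsto F_T(t,x)$ is measurable because $(t,s) \mapsto F(t+s,x)$ is jointly measurable (translation preserving measurability). For the Lipschitz property (4), I would apply the support-function identity to write
\[
d_H(F_T(t,x_1), F_T(t,x_2)) = \sup_{|v|=1}\left|\frac{1}{T}\int_0^T \bigl(h_{F(t+s,x_1)}(v) - h_{F(t+s,x_2)}(v)\bigr)\,ds\right|,
\]
pass the absolute value inside the integral and take the supremum in $v$ pointwise to bound this by $\frac{1}{T}\int_0^T d_H(F(t+s,x_1),F(t+s,x_2))\,ds$, and then invoke the Lipschitz property of $F$ to conclude it is at most $K|x_1-x_2|$.

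For the existence of the time average (5), I would carry out a Fubini-type interchange
\[
\frac{1}{S}\int_0^S F_T(t,x)\,dt = \frac{1}{T}\int_0^T \frac{1}{S}\int_s^{S+s} F(u,x)\,du\,ds,
\]
and observe that for each fixed $s \in [0,T]$ the inner shifted average converges in Hausdorff distance to $\bar{F}(x)$ as $S \to \infty$, uniformly in $s \in [0,T]$, since the discrepancy between $\frac{1}{S}\int_0^S F(u,x)\,du$ and $\frac{1}{S}\int_s^{S+s} F(u,x)\,du$ is of order $s/S \le T/S$ by the boundedness $\|F\| \le M$. This gives $\overline{F_T}(x) = \bar{F}(x)$; in particular the limit exists.

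The only step with any technical content is the Fubini interchange for convex-valued Aumann integrals, but this is classical (see Aumann \cite{aumann1965integrals}); the rest of the argument is bookkeeping, which is presumably why the author states that the lemma is easily verified.
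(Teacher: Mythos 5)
Your proof is correct. The paper offers no proof of this lemma at all (it is stated as ``easily verified''), so there is nothing to diverge from; your verification via the support-function identity for Aumann integrals, together with the Fubini interchange for condition (5) giving $\overline{F_T}=\bar{F}$, is exactly the kind of routine argument the author is implicitly relying on, and it matches the tools used elsewhere in the paper (the support-function formula for $d_H$ in Section 2 and the Fubini identity invoked in the proof of the additivity theorem).
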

The following result is our key lemma.
\begin{lem}
\label{lem:key_lemma}Suppose $\Fxt$ satisfies Assumption \ref{ass:Averaging condition}
and $T>0$ then
\[
d_{H}\left(\SF,\SFT\right)\le\e MT\left(1+\frac{3}{2}Ke^{K}\right).
\]
\end{lem}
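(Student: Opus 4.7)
The plan is to prove both containments of the Hausdorff distance by a parallel construction: for each solution of one inclusion I build an auxiliary absolutely continuous curve that approximates it within $\epsilon MT$ pointwise and whose derivative lies within defect $\frac{3}{2}K\epsilon^{2}MT$ of the other inclusion; Theorem \ref{thm:Filippov lemma} then produces a companion solution within $e^{K}\cdot\frac{3}{2}K\epsilon MT$ of the auxiliary curve, and the triangle inequality yields the claimed $\epsilon MT(1+\frac{3}{2}Ke^{K})$.

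For the forward direction I take $\x\in\SF$, extend it to $[0,\epsilon^{-1}+T]$ by appending any solution of \eqref{eq:diffinc} so that the sliding average below is well-defined, and set
\[
\tilde{y}(t)=x_{0}+\int_{0}^{t}\frac{x^{*}(\tau+T)-x^{*}(\tau)}{T}\,d\tau,\qquad \dot{\tilde y}(t)=\frac{1}{T}\int_{0}^{T}\dot x^{*}(t+s)\,ds.
\]
Swapping the integration order rewrites $\tilde y(t)-\xx t$ as $\frac{1}{T}\int_{0}^{T}[(x^{*}(t+s)-\xx t)-(x^{*}(s)-x_{0})]\,ds$, which the bound $|\dot x^{*}|\le\epsilon M$ controls by $\epsilon MT$. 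Since $\dot x^{*}(t+s)\in\epsilon F(t+s,x^{*}(t+s))$ while $|x^{*}(t+s)-\tilde y(t)|\le\epsilon M(s+T)$, the convex-valuedness of $F$ together with its $K$-Lipschitz spatial regularity gives
\[
d\bigl(\dot{\tilde y}(t),\epsilon F_{T}(t,\tilde y(t))\bigr)\le\epsilon K\cdot\frac{1}{T}\int_{0}^{T}\epsilon M(s+T)\,ds=\frac{3}{2}K\epsilon^{2}MT.
\]
Theorem \ref{thm:Filippov lemma} applied on $\ze$ then supplies $z^{*}(\cdot)\in\SFT$ with $\sup_{t}|\tilde y(t)-z^{*}(t)|\le e^{K}\cdot\frac{3}{2}K\epsilon MT$.

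For the reverse direction, given $z^{*}(\cdot)\in\SFT$ the Aumann-integral description of $F_{T}$ furnishes a measurable selection $w(t,s)\in F(t+s,z^{*}(t))$ with $\dot z^{*}(t)=\frac{\epsilon}{T}\int_{0}^{T}w(t,s)\,ds$ almost everywhere. Extending $z^{*}$ by $x_{0}$ and $w$ by any measurable selection of $F(\cdot,x_{0})$ to $t\in[-T,0]$, I set $\tilde x(t)=x_{0}+\frac{\epsilon}{T}\int_{0}^{t}\int_{0}^{T}w(\tau-s,s)\,ds\,d\tau$. A Fubini swap mirroring the forward one recasts $\tilde x(t)-z^{*}(t)$ as two boundary integrals of length $s$ on each side with integrand bounded by $\epsilon M$, yielding $\sup_{t}|\tilde x(t)-z^{*}(t)|\le\epsilon MT$. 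Since $w(\tau-s,s)\in F(\tau,z^{*}(\tau-s))$ with $|z^{*}(\tau-s)-z^{*}(\tau)|\le\epsilon Ms$, averaging in $s$ places $\dot{\tilde x}(\tau)$ within $\frac{1}{2}K\epsilon^{2}MT$ of $\epsilon F(\tau,z^{*}(\tau))$, and the further $K\epsilon MT$ Hausdorff shift from $F(\tau,z^{*}(\tau))$ to $F(\tau,\tilde x(\tau))$ gives $d(\dot{\tilde x}(\tau),\epsilon F(\tau,\tilde x(\tau)))\le\frac{3}{2}K\epsilon^{2}MT$; Theorem \ref{thm:Filippov lemma} then produces $x^{*}(\cdot)\in\SF$ satisfying the same bound.

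The hard part will be the reverse direction: the clean sliding average available for the Lipschitz curve $\x$ must be replaced by averaging over the Aumann-integral measurable selection furnished by $F_{T}$, and a careful Fubini and boundary analysis on the extended interval $[-T,\epsilon^{-1}]$ is required to match the same $O(\epsilon MT)$ constant.
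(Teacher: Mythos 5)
Your proof is correct and follows essentially the same route as the paper: in each direction you build a sliding time-average (of the trajectory's derivative in the forward direction, of the Aumann-selection in the reverse direction), bound the pointwise deviation by $O(\epsilon MT)$ and the defect by $O(K\epsilon^{2}MT)$, and close with the Filippov--Gronwall inequality. The only deviation is cosmetic: anchoring $\tilde y$ at $x_{0}$ (a constant shift of the paper's $\tilde x(t)=\frac{1}{T}\int_{0}^{T}x^{*}(t+s)\,ds$) costs you a slightly larger forward-direction constant, but it still sits under the stated bound $\epsilon MT\left(1+\frac{3}{2}Ke^{K}\right)$.
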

\begin{proof}
Let $\x$ be an arbitrary solution of \eqref{eq:diffinc} in $\ze$
which we extend, in an arbitrary manner, to a solution of \eqref{eq:diffinc}
in $\left[0,\e^{-1}+T\right]$. We approximate $\x$ in $\ze$ by
$\tilde{x}\left(t\right)=\frac{1}{T}\int_{0}^{T}x^{*}\left(t+s\right)ds$.
This approximation satisfies 
\[
\left|\tilde{x}\left(t\right)-x^{*}\left(t\right)\right|\le\frac{1}{T}\int_{0}^{T}\left|x^{*}\left(t+s\right)-x^{*}\left(t\right)\right|ds\le\frac{\e}{T}\int_{0}^{T}Msds\le\frac{1}{2}\e MT,
\]
 for every $t\in\ze$. 

Since $\dot{\tilde{x}}\left(t\right)=\frac{1}{T}\int_{0}^{T}\dot{x}^{*}\left(t+s\right)ds$
the triangle inequality and the Lipschitz continuity of $\Fxt$ imply
that for every $t\in\ze$ 
\begin{eqnarray*}
d\left(\dot{\tilde{x}}\left(t\right),\e F_{T}\left(t,x^{*}\left(t\right)\right)\right) & \le & \e d_{H}\left(\frac{1}{T}\int_{0}^{T}F\left(t+s,x^{*}\left(t+s\right)\right)ds,\frac{1}{T}\int_{0}^{T}F\left(t+s,x^{*}\left(t\right)\right)ds\right)\\
 & \le & \frac{\e K}{T}\int_{0}^{T}\left|x^{*}\left(t+s\right)-x^{*}\left(t\right)\right|ds\le\frac{1}{2}\e^{2}KMT.
\end{eqnarray*}
Thus,
\begin{eqnarray*}
d\left(\dot{\tilde{x}}\left(t\right),\e F_{T}\left(t,\tilde{x}\left(t\right)\right)\right) & \le & d\left(\dot{\tilde{x}}\left(t\right),\e F_{T}\left(t,\xx t\right)\right)+d_{H}\left(\e F_{T}\left(t,\xx t\right),\e F_{T}\left(t,\tilde{x}\left(t\right)\right)\right)\\
 & \le & \frac{1}{2}\e^{2}KMT+\frac{1}{2}\e^{2}KMT=\e^{2}KMT,
\end{eqnarray*}
and
\[
\int_{0}^{\e^{-1}}d\left(\dot{\tilde{x}}\left(s\right),\e F_{T}\left(s,\tilde{x}\left(s\right)\right)\right)ds\le\e KMT.
\]
By the Filippov-Gronwall inequality (Theorem \ref{thm:Filippov lemma})
there exists $z^{**}\left(\cdot\right)$ a solution of \eqref{eq:T-inc}
which is $\e KMTe^{K}$ close to $\tilde{x}\pd$, hence, it is also
$\e MT\left(\frac{1}{2}+Ke^{K}\right)$ close to $\x$ in $\ze$.

On the other hand, let $z^{*}\pd$ be an arbitrary solution of \eqref{eq:T-inc}
defined on $\ze$. Now for every $t\in\ze$
\[
z^{*}\left(t\right)\in\int_{0}^{t}\e F_{T}\left(s,z^{*}\left(s\right)\right)ds=\frac{1}{T}\int_{0}^{t}\int_{0}^{T}\e F\left(s_{1}+s_{2},z^{*}\left(s_{1}\right)\right)ds_{2}ds_{1}
\]
Let $A=\left\{ \left(s_{1},s_{2}\right)\subset\R^{2}|s_{1}\in\left[0,\e^{-1}\right],s_{2}\in\left[s_{1},s_{1}+T\right]\right\} $,
and $u\left(s_{1},s_{2}\right)\in\e F\left(s_{2},z^{*}\left(s_{1}\right)\right)$
be a measurable selection defined almost everywhere in $A$, so that
for every $t\in\ze$ 
\[
z^{*}\left(t\right)=x_{0}+\frac{1}{T}\int_{0}^{t}\int_{s_{1}}^{s_{1}+T}u\left(s_{1},s_{2}\right)ds_{2}ds_{1}.
\]

To generate an approximation of $z^{*}\pd$, we extend it to $\left[-T,0\right]$
by setting $z^{*}\left(t\right)=x_{0}$, and extending $u\left(s_{1},s_{2}\right)$
to $\left[-T,0\right]\times\left[0,T\right]$ by choosing an arbitrary
measurably selection $u\left(s_{1},s_{2}\right)\in\e F\left(s_{2},x_{0}\right)$.
Then we approximate $z^{*}\left(t\right)$ by 
\[
\tilde{z}\left(t\right)=x_{0}+\frac{1}{T}\int_{0}^{t}\int_{s_{2}-T}^{s_{2}}u\left(s_{1},s_{2}\right)ds_{1}ds_{2}.
\]

Setting 
\[
A_{t}^{1}=\left\{ \left(s_{1},s_{2}\right)\subset\R^{2}|s_{1}\in\left[0,t\right],s_{2}\in\left[s_{1},s_{1}+T\right]\right\} 
\]
 
\[
A_{t}^{2}=\left\{ \left(s_{1},s_{2}\right)\subset\R^{2}|s_{2}\in\left[0,t\right],s_{1}\in\left[s_{2}-T,s_{2}\right]\right\} ,
\]
we express $z^{*}\pd$ and its approximation $\tilde{z}\pd$ by 
\[
z^{*}\left(t\right)=x_{0}+\frac{1}{T}\int_{A_{t}^{1}}u\left(s_{1},s_{2}\right)d\left(s_{1},s_{2}\right)
\]
\[
\tilde{z}\left(t\right)=x_{0}+\frac{1}{T}\int_{A_{t}^{2}}u\left(s_{1},s_{2}\right)d\left(s_{1},s_{2}\right).
\]
With this definition, we bound their difference for every $t\in\ze$
by 
\begin{eqnarray}
\left|\tilde{z}\left(t\right)-z^{*}\left(t\right)\right| & = & \frac{1}{T}\left|\int_{A_{t}^{1}}u\left(s_{1},s_{2}\right)d\left(s_{1},s_{2}\right)-\int_{A_{t}^{2}}u\left(s_{1},s_{2}\right)d\left(s_{1},s_{2}\right)\right|\label{eq:z_tilda_app}\\
 & = & \frac{1}{T}\left|\int_{A_{t}^{1}\backslash A_{t}^{2}}u\left(s_{1},s_{2}\right)d\left(s_{1},s_{2}\right)-\int_{A_{t}^{2}\backslash A_{t}^{1}}u\left(s_{1},s_{2}\right)d\left(s_{1},s_{2}\right)\right|\le\e MT,\nonumber 
\end{eqnarray}
 since $u\left(\cdot,\cdot\right)$ is bounded in norm by $\e M$
and the measure of the set $\left(A_{t}^{1}\backslash A_{t}^{2}\right)\cup\left(A_{t}^{2}\backslash A_{t}^{1}\right)$
is bounded by $T^{2}$.

To apply the Filippov-Gronwall inequality we bound 
\begin{equation}
d\left(\dot{\tilde{z}}\left(t\right),\e F\left(t,\tilde{z}\left(t\right)\right)\right)\le d\left(\dot{\tilde{z}}\left(t\right),\e F\left(t,z^{*}\left(t\right)\right)\right)+\e d_{H}\left(F\left(t,z^{*}\left(t\right)\right),F\left(t,\tilde{z}\left(t\right)\right)\right).\label{eq:integrand_fil_bnd}
\end{equation}

The second term above is bounded using \eqref{eq:z_tilda_app} by
$\e^{2}KMT$, and the first term above is bounded by
\begin{eqnarray*}
d\left(\dot{\tilde{z}}\left(t\right),\e F\left(t,z^{*}\left(t\right)\right)\right) & = & \e d_{H}\left(\frac{1}{T}\int_{0}^{T}F\left(t,z^{*}\left(t-s\right)\right)ds,F\left(t,z^{*}\left(t\right)\right)\right)\\
 & \le & \frac{\e K}{T}\int_{0}^{T}\left|z^{*}\left(t-s\right)-z^{*}\left(t\right)\right|ds\le\frac{1}{2}\e^{2}KMT,
\end{eqnarray*}
where we use the fact that
\[
\dot{\tilde{z}}\left(t\right)=\frac{1}{T}\int_{t-T}^{t}u\left(s,t\right)ds\in\frac{1}{T}\int_{t-T}^{t}\e F\left(t,z^{*}\left(s\right)\right)ds=\frac{1}{T}\int_{0}^{T}\e F\left(t,z^{*}\left(t-s\right)\right)ds.
\]

This bounds \eqref{eq:integrand_fil_bnd} by $\frac{3}{2}\e^{2}KMT$,
and establishes the existence of a solution a solution $x^{**}\pd$
of \eqref{eq:diffinc} which is $\frac{3}{2}\e KMTe^{K}$ far from
$\tilde{z}\pd$, hence, $\e MT\left(1+\frac{3}{2}Ke^{K}\right)$ far
from $z^{*}\pd$ in $\ze$. This completes the proof.
\end{proof}

\section{\label{sec:Averaging-differential-Inclusion}Averaging differential
Inclusions}

In this section we establish new estimates for the averaging of differential
inclusions in a general setting, which we apply to obtain sharp bounds
for multi-frequency differential inclusion. Specifically, We consider
two types of inclusions of the form \eqref{eq:diffinc}, where $\Fxt$
is either of the form $\Fxt=F_{1}\left(t,x\right)+F_{2}\left(t,x\right)+\dots+F_{m}\left(t,x\right)$
and each $\Fj$ is periodic in $t$ with period $T_{j}$, or when
each entry of $\Fxt$ is periodic, namely, $\Fxt=\left(F_{1}\left(t,x\right),F_{2}\left(t,x\right),\dots,F_{m}\left(t,x\right)\right)$,
and $\Fj$ is periodic in $t$ with period $T_{j}$.

Artstein \cite{artstein2007averaging} presented a new approach for
estimating the approximation error in the study of averaging ordinary
differential equations, which uses quantitative information on the
local fluctuations of the time-dependent vector field. He extended
this approach to control systems and differential inclusions in a
series of talks. 

We start by presenting Artstein's gauge; we then establish its additivity
and verify our main results on multi-frequency differential inclusions.
\begin{thm}
\label{thm:Art_bnd}Suppose $\Fxt$ satisfies Assumption \ref{ass:Averaging condition}
and there exists $\left(\Delta\left(\epsilon\right),\eta(\epsilon)\right)$
satisfying 
\begin{equation}
d_{H}\left(\frac{\epsilon}{\Delta(\epsilon)}\int_{s_{0}}^{s_{0}+\frac{\Delta\left(\epsilon\right)}{\epsilon}}\Fx sds,\bar{F}\left(x\right)\right)\leq\eta(\epsilon),\label{eq:Artstein fluctuation bounds-1}
\end{equation}
for all $s_{0}\geq0$ and $x\in\Omega$. Then the approximation error
of equation \eqref{eq:diffinc} is bounded by
\[
M\left(1+\frac{3}{2}Ke^{K}\right)\de+e^{K}\ee
\]
in the time interval $\left[0,\epsilon^{-1}\right]$. In particular
the approximation error is of order $O\left(\max\left(\Delta\left(\epsilon\right),\eta\left(\epsilon\right)\right)\right)$. \end{thm}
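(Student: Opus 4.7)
The plan is to combine the Key Lemma with Corollary \ref{cor:filippov} via a triangle inequality, using $T = \Delta(\epsilon)/\epsilon$ as the averaging window.

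First I would set $T := \Delta(\epsilon)/\epsilon$ and apply Lemma \ref{lem:key_lemma}. Since $\epsilon M T = M \Delta(\epsilon)$, the lemma directly gives
\[
d_H\bigl(\SF, \SFT\bigr) \le M\Delta(\epsilon)\Bigl(1+\tfrac{3}{2}Ke^{K}\Bigr).
\]
This takes care of the first term in the claimed bound, and is where all the work actually lies, since the Key Lemma has already been proved.

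Next I would compare $F_T(t,x)$ with $\bar F(x)$ pointwise in $(t,x)$. By definition
\[
F_T(t,x) = \frac{1}{T}\int_0^T F(t+s,x)\,ds = \frac{\epsilon}{\Delta(\epsilon)}\int_t^{t+\Delta(\epsilon)/\epsilon} F(s,x)\,ds,
\]
so the hypothesis \eqref{eq:Artstein fluctuation bounds-1} with $s_0 = t$ yields $d_H(F_T(t,x),\bar F(x)) \le \eta(\epsilon)$ uniformly in $t \ge 0$ and $x \in \Omega$. Since both $F_T$ and $\bar F$ satisfy Assumption \ref{ass:Averaging condition} (by the lemmas preceding the Key Lemma), Corollary \ref{cor:filippov} applies and gives
\[
d_H\bigl(\SFT, \SFB\bigr) \le e^{K}\eta(\epsilon).
\]

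Finally I would combine the two estimates by the triangle inequality for the Hausdorff distance on the space of subsets of $C([0,\epsilon^{-1}];\Rd)$:
\[
d_H\bigl(\SF, \SFB\bigr) \le M\Bigl(1+\tfrac{3}{2}Ke^{K}\Bigr)\Delta(\epsilon) + e^{K}\eta(\epsilon),
\]
which is exactly the stated bound; the $O(\max(\Delta(\epsilon),\eta(\epsilon)))$ statement then follows immediately. There is no real obstacle here: the Key Lemma absorbs the oscillation error of the trajectory and Corollary \ref{cor:filippov} absorbs the residual gap between the sliding average $F_T$ and the limit $\bar F$, and the choice $T = \Delta(\epsilon)/\epsilon$ is precisely what balances the two contributions so that both scale as $\Delta(\epsilon)$ and $\eta(\epsilon)$ respectively.
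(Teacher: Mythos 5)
Your proposal is correct and follows essentially the same route as the paper's own proof: choose $T=\Delta(\epsilon)/\epsilon$, bound $d_{H}\left(\SF,\SFT\right)$ by Lemma \ref{lem:key_lemma}, bound $d_{H}\left(\SFT,\SFB\right)$ by Corollary \ref{cor:filippov} after observing that \eqref{eq:Artstein fluctuation bounds-1} is exactly the statement $d_{H}\left(F_{T}\left(t,x\right),\Fb x\right)\le\ee$, and conclude by the triangle inequality.
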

\begin{proof}
Set $T=\frac{\de}{\e}$. The triangle inequality bounds $d_{H}\left(\SF,\SFB\right)$
by 
\[
d_{H}\left(\SF,\SFT\right)+d_{H}\left(\SFT,\SFB\right).
\]
The first term above is bounded using Lemma \ref{lem:key_lemma} by
$M\left(1+\frac{3}{2}Ke^{K}\right)\de$ and the second term is bounded
using Corollary \ref{cor:filippov} by $\ee e^{K}$, since using our
definition of $T$ \eqref{eq:Artstein fluctuation bounds-1} is equivalent
to $d_{H}\left(F_{T}\left(t,x\right),\Fb x\right)\le\ee$.
\end{proof}
Applying this theorem to a periodic differential inclusion implies
the classical result of Plotnikov \cite{plotnikov1979averaging}.
\begin{cor}
Suppose $\Fxt$ satisfies Assumption \ref{ass:Averaging condition}
and that it is periodic in $t$ with period $T$. Then the approximation
error of equation \eqref{eq:diffinc} is bounded by $M\left(1+\frac{3}{2}Ke^{K}\right)T\e$.
In particular it is of order $O\left(\e\right)$.\end{cor}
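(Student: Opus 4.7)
The plan is to apply Theorem \ref{thm:Art_bnd} with the trivial choice of parameters that periodicity affords. First, I would observe that when $\Fxt$ is $T$-periodic in $t$, the partial average over any window of length $T$ coincides with the full time average, i.e.
\[
\frac{1}{T}\int_{s_{0}}^{s_{0}+T}\Fx s\,ds=\Fbx
\]
for every $s_{0}\ge 0$ and every $x\in\D$. Consequently, the Artstein fluctuation bound \eqref{eq:Artstein fluctuation bounds-1} is satisfied with the choices $\Delta(\epsilon)=\epsilon T$ (so the integration window is exactly $\Delta(\epsilon)/\epsilon=T$) and $\eta(\epsilon)=0$.

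Substituting these values into the estimate provided by Theorem \ref{thm:Art_bnd} yields the bound
\[
M\!\left(1+\tfrac{3}{2}Ke^{K}\right)\de+e^{K}\ee=M\!\left(1+\tfrac{3}{2}Ke^{K}\right)T\epsilon,
\]
which is precisely the claimed estimate. Since $T$, $M$, and $K$ are fixed constants independent of $\epsilon$, this is manifestly $O(\epsilon)$.

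A slightly more direct alternative bypasses Theorem \ref{thm:Art_bnd} and invokes Lemma \ref{lem:key_lemma} straight away: periodicity forces $F_{T}(t,x)\equiv\Fbx$, so the auxiliary inclusion \eqref{eq:T-inc} is identical to the averaged inclusion \eqref{eq:avginc} and $\SFT=\SFB$. The conclusion of Lemma \ref{lem:key_lemma} then reads $d_{H}\!\left(\SF,\SFB\right)\le \epsilon M T(1+\tfrac{3}{2}Ke^{K})$, which is exactly what is wanted. There is essentially no obstacle in either route; the only substantive point is the observation that $T$-periodicity makes the finite-window average equal to the infinite-time average, which follows directly from the definition of $\bar F$ in \eqref{eq:Average def}.
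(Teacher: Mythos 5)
Your proposal is correct and follows the paper's own argument: the paper likewise sets $\Delta\left(\epsilon\right)=\epsilon T$, $\eta\left(\epsilon\right)=0$ and applies Theorem \ref{thm:Art_bnd}. Your alternative remark that one may invoke Lemma \ref{lem:key_lemma} directly (since $F_{T}\left(t,x\right)=\bar{F}\left(x\right)$ under $T$-periodicity) is also valid but amounts to the same computation.
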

\begin{proof}
Set $\de=\epsilon T$ and $\ee=0$ and apply Theorem \ref{thm:Art_bnd}.

To prove the next corollary we need the following lemma.\end{proof}
\begin{lem}
\label{lem:cnv_lem-1}Suppose $D_{1},D_{2},E_{1},E_{2}\subset\Rd$
are non-empty convex sets. Then 
\[
d_{H}\left(D_{2},E_{2}\right)\le d_{H}\left(D_{1}+D_{2},E_{1}+E_{2}\right)+d_{H}\left(D_{1},E_{1}\right).
\]
\end{lem}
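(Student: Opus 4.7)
The plan is to reduce the claim to a triangle inequality for support functions. By hypothesis all four sets are convex (and, implicit in the paper's setting, bounded, since the Hausdorff distances are assumed finite), so each admits a support function $h_{D}$ on $\partial B(\mathbf{0},1)$, and the paper has already recorded the identity $d_{H}(A,B)=\|h_{A}(\cdot)-h_{B}(\cdot)\|$ for convex sets.

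First I would invoke the well-known additivity of support functions under Minkowski sum, namely $h_{A+B}=h_{A}+h_{B}$, which follows directly from the definition $h_{A}(x)=\sup_{y\in A}x\cdot y$ together with the fact that $\sup_{y_{1}\in A,y_{2}\in B}x\cdot(y_{1}+y_{2})=\sup_{y_{1}\in A}x\cdot y_{1}+\sup_{y_{2}\in B}x\cdot y_{2}$. Applying this to the two Minkowski sums in the statement gives
\[
h_{D_{1}+D_{2}}-h_{E_{1}+E_{2}}=(h_{D_{1}}-h_{E_{1}})+(h_{D_{2}}-h_{E_{2}}).
\]

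Second, I would rearrange this algebraic identity as
\[
h_{D_{2}}-h_{E_{2}}=\bigl(h_{D_{1}+D_{2}}-h_{E_{1}+E_{2}}\bigr)-\bigl(h_{D_{1}}-h_{E_{1}}\bigr),
\]
and take the supremum norm of both sides. The triangle inequality in $\|\cdot\|$ then yields
\[
\|h_{D_{2}}-h_{E_{2}}\|\le\|h_{D_{1}+D_{2}}-h_{E_{1}+E_{2}}\|+\|h_{D_{1}}-h_{E_{1}}\|,
\]
which, translating back via the support-function formula for the Hausdorff distance, is exactly the inequality claimed.

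The argument is essentially a one-line manipulation once the support-function machinery is in place; there is no serious obstacle. The only mild subtlety worth noting in the writeup is confirming that the sets are bounded enough for the support functions to be finite (otherwise $d_{H}$ itself is not meaningful), but this is implicit in the hypothesis that the Hausdorff distances on the right-hand side are finite. If one wished to avoid support functions entirely, one could instead argue directly: for a given $y_{2}\in D_{2}$, pick an arbitrary $y_{1}\in D_{1}$, approximate $y_{1}+y_{2}\in D_{1}+D_{2}$ by some $z_{1}+z_{2}\in E_{1}+E_{2}$, and use an approximation of $y_{1}$ by a point in $E_{1}$ to extract a good approximant of $y_{2}$ in $E_{2}$; however, the support-function route is cleaner and exploits precisely the convexity assumption in the hypothesis.
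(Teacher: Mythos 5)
Your proof is correct and follows essentially the same route as the paper: both reduce the claim to the identity $h_{A+B}=h_{A}+h_{B}$ for support functions and the formula $d_{H}(A,B)=\left\Vert h_{A}-h_{B}\right\Vert$, and then conclude by a triangle-inequality manipulation (the paper phrases it as the reverse triangle inequality applied to $\left\Vert (h_{D_{1}}-h_{E_{1}})+(h_{D_{2}}-h_{E_{2}})\right\Vert$, which is the same algebra as your rearrangement). Your remark about boundedness of the sets is a reasonable aside but does not change the argument.
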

\begin{proof}
By the properties of the support function (see, \cite[Theorem 1.7.5]{schneider1993convex})
we express the Hausdorff distance $d_{H}\left(D_{1}+D_{2},E_{1}+E_{2}\right)$
by
\[
\left\Vert h_{D_{1}+D_{2}}\left(x\right)-h_{E_{1}+E_{2}}\left(x\right)\right\Vert =\left\Vert h_{D_{1}}\left(x\right)+h_{D_{2}}\left(x\right)-h_{E_{1}}\left(x\right)-h_{E_{2}}\left(x\right)\right\Vert .
\]
The reverse triangle inequality bounds the latter from below by
\[
\left|\left\Vert h_{D_{1}}\left(x\right)-h_{E_{1}}\left(x\right)\right\Vert -\left\Vert h_{D_{2}}\left(x\right)-h_{E_{2}}\left(x\right)\right\Vert \right|\ge\left\Vert h_{D_{1}}\left(x\right)-h_{E_{1}}\left(x\right)\right\Vert -\left\Vert h_{D_{2}}\left(x\right)-h_{E_{2}}\left(x\right)\right\Vert ,
\]
which completes the proof.
\end{proof}
The following corollary extends the classical estimate in \cite[Theorem 4.3.6]{sanders2007averaging}
from differential equations to differential inclusions.
\begin{cor}
Suppose $\Fxt$ satisfies Assumption \ref{ass:Averaging condition}
and 
\begin{equation}
\sup_{x\in\D,T\in\ze}\e d_{H}\left(\int_{0}^{T}F\left(s,x\right)ds,T\bar{F}\left(x\right)\right)\le\delta\left(\e\right).\label{eq:delta_bnd}
\end{equation}
Then the approximation error of equation \eqref{eq:diffinc} is of
order $O\left(\sqrt{\delta\left(\e\right)}\right)$.\end{cor}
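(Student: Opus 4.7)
The plan is to reduce to Theorem \ref{thm:Art_bnd} by constructing a suitable Artstein pair $(\Delta(\epsilon),\eta(\epsilon))$ out of the hypothesis. The hypothesis bounds integrals starting at $s=0$, while Artstein's gauge requires bounds on integrals starting at arbitrary $s_0\ge 0$; bridging this gap is where the real work lies, and it is exactly what Lemma \ref{lem:cnv_lem-1} is designed for.

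First I would exploit the additivity of the Aumann integral on disjoint intervals, which gives the Minkowski identity
\[
\int_{0}^{s_0+T}F(s,x)\,ds \;=\; \int_{0}^{s_0}F(s,x)\,ds \;+\; \int_{s_0}^{s_0+T}F(s,x)\,ds.
\]
Applying Lemma \ref{lem:cnv_lem-1} with $D_1=\int_0^{s_0}F(s,x)\,ds$, $D_2=\int_{s_0}^{s_0+T}F(s,x)\,ds$, $E_1=s_0\bar F(x)$, and $E_2=T\bar F(x)$, and invoking the hypothesis \eqref{eq:delta_bnd} to control both $d_H(D_1,E_1)$ and $d_H(D_1+D_2,E_1+E_2)$, one obtains
\[
\epsilon\,d_H\!\left(\int_{s_0}^{s_0+T}F(s,x)\,ds,\;T\bar F(x)\right) \le 2\delta(\epsilon).
\]

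Dividing by $T$ and setting $T=\Delta(\epsilon)/\epsilon$ yields the Artstein-gauge bound \eqref{eq:Artstein fluctuation bounds-1} with $\eta(\epsilon)=2\delta(\epsilon)/\Delta(\epsilon)$. To minimize $\max\bigl(\Delta(\epsilon),\eta(\epsilon)\bigr)$ I would balance the two terms by choosing $\Delta(\epsilon):=\sqrt{\delta(\epsilon)}$, which gives $\eta(\epsilon)=2\sqrt{\delta(\epsilon)}$. Plugging these into Theorem \ref{thm:Art_bnd} yields an error bound
\[
M\!\left(1+\tfrac{3}{2}Ke^{K}\right)\sqrt{\delta(\epsilon)} + 2e^{K}\sqrt{\delta(\epsilon)} \;=\; O\!\bigl(\sqrt{\delta(\epsilon)}\bigr),
\]
as required.

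The main obstacle is the transfer step from integrals based at $0$ to integrals based at $s_0$: in the set-valued world one cannot simply subtract, and naive use of the triangle inequality would lose the convexity structure. Lemma \ref{lem:cnv_lem-1}, established via support-function identities, is what makes this subtraction legitimate (up to a factor of $2$) and is the only non-routine ingredient. Once the transfer is in hand, the remaining optimization in $\Delta(\epsilon)$ is the familiar square-root balancing seen in classical ODE averaging with weak fluctuation estimates.
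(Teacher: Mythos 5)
Your proposal is correct and follows essentially the same route as the paper's proof: the same application of Lemma \ref{lem:cnv_lem-1} with $D_{1}=\int_{0}^{s_{0}}F\left(s,x\right)ds$, $D_{2}=\int_{s_{0}}^{s_{0}+T}F\left(s,x\right)ds$, and the same balancing choice $\Delta\left(\epsilon\right)=\sqrt{\delta\left(\epsilon\right)}$, $\eta\left(\epsilon\right)=2\sqrt{\delta\left(\epsilon\right)}$ fed into Theorem \ref{thm:Art_bnd}. The only point you gloss over (which the paper also treats only briefly) is that \eqref{eq:delta_bnd} controls the shifted integral only when $s_{0}+T\le\epsilon^{-1}$, so the gauge bound is available for $s_{0}\in\left[0,\left(1-\sqrt{\delta\left(\epsilon\right)}\right)\epsilon^{-1}\right]$ and the remaining stretch of length $T$ must be absorbed by the trivial estimate $\epsilon MT=M\sqrt{\delta\left(\epsilon\right)}$, which does not affect the stated order.
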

\begin{proof}
Dividing both sides of Inequality \eqref{eq:delta_bnd} by $\e$,
one obtains
\[
\sup_{x\in\D,T\in\ze}d_{H}\left(\int_{0}^{T}F\left(s,x\right)ds,T\bar{F}\left(x\right)\right)\le\frac{\delta\left(\e\right)}{\e}.
\]

Let us fix $x\in\D$, $T=\frac{\sqrt{\delta\left(e\right)}}{\e}$
and $s_{0}\in\left[0,\e^{-1}-T\right]$. Applying Lemma \ref{lem:cnv_lem-1}
with the convex sets $D_{1}=\int_{0}^{s_{0}}F\left(s,x\right)ds,\ D_{2}=\int_{s_{0}}^{s_{0}+T},F\left(s,x\right)ds,E_{1}=s_{0}\overline{F}\left(x\right)$
and $E_{2}=T\overline{F}\left(x\right)$ we bound
\begin{equation}
d_{H}\left(\int_{s_{0}}^{s_{0}+T}F\left(s,x\right)ds,T\bar{F}\left(x\right)\right)\label{eq:int_s0_T}
\end{equation}
using \eqref{eq:delta_bnd} by 
\[
d_{H}\left(\int_{0}^{s_{0}}F\left(s,x\right)ds,s_{0}\bar{F}\left(x\right)\right)+d_{H}\left(\int_{0}^{s_{0}+T}F\left(s,x\right)ds,\left(s_{0}+T\right)\bar{F}\left(x\right)\right)\le2\frac{\delta\left(\e\right)}{\e}.
\]
Dividing Equation \eqref{eq:int_s0_T} by $T$ yields
\[
d_{H}\left(\frac{1}{T}\int_{s_{0}}^{s_{0}+T}F\left(s,x\right)ds,\bar{F}\left(x\right)\right)\le2\frac{\delta\left(\e\right)}{\e T}=2\sqrt{\delta\left(\e\right)}.
\]
Since $x$ and $s_{0}$ are arbitrary, the conditions of Theorem \ref{thm:Art_bnd}
are satisfied in $\left[0,\left(1-\sqrt{\delta\left(\e\right)}\right)\e^{-1}\right]$
with $\de=\sqrt{\delta\left(\e\right)}$ and $\ee=2\sqrt{\delta\left(\e\right)}$,
and it is easy to see that a bound of order $O\left(\sqrt{\delta\left(\e\right)}\right)$
follows.
\end{proof}
Following \cite{bright2009tight} we provide an additivity property
for this bound namely, we show that when we can express $\Fxt=F_{1}\left(t,x\right)+F_{2}\left(t,x\right)+\dots+F_{m}\left(t,x\right)$,
then the estimate of the sum is bounded by the sum of the estimates.
This result is then applied to multi-frequency systems.
\begin{thm}
\label{thm:(Additivity-Lemma)-Given-1}Suppose $\Fxt=F_{1}\left(t,x\right)+F_{2}\left(t,x\right)+\dots+F_{m}\left(t,x\right)$
satisfies Assumption \ref{ass:Averaging condition}, and that for
every $j=1,\dots m$ the set-valued mapping $\Fj$ has a well defined
average $\bar{F}_{j}\left(x\right)$. If for every $\jm$ there exist
$\left(\Delta_{j}(\epsilon),\eta_{j}(\epsilon)\right)$ satisfying
\begin{equation}
\left|d_{H}\left(\frac{\epsilon}{\Delta_{j}(\epsilon)}\int_{s_{0}}^{s_{0}+\frac{\Delta_{j}\left(\epsilon\right)}{\epsilon}}F_{j}\left(s,x\right)ds,\bar{F}_{j}\left(x\right)\right)\right|\leq\eej,\label{eq:art_bnd_i}
\end{equation}
for all $s_{0}\geq0$ and $x\in\D$. Then the approximation error
of equation \eqref{eq:diffinc} is bounded by 
\begin{equation}
M\left(1+\frac{3}{2}Ke^{K}\right)\smj\dej+e^{K}\smj\eej.\label{eq:bnd_sum}
\end{equation}
\end{thm}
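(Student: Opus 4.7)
The plan is a telescoping reduction. Introduce the intermediate set-valued mappings
\[
G_{k}(t,x)=\bar{F}_{1}(x)+\cdots+\bar{F}_{k}(x)+F_{k+1}(t,x)+\cdots+F_{m}(t,x),\qquad k=0,1,\dots,m,
\]
so that $G_{0}=F$ and $G_{m}=\bar{F}$, and bound
\[
d_{H}\bigl(\SF,\SFB\bigr)\le\sum_{k=1}^{m}d_{H}\bigl(\S{G_{k-1}},\S{G_{k}}\bigr)
\]
by the triangle inequality. Summing over $k$ reduces \eqref{eq:bnd_sum} to the single-index estimate $d_{H}(\S{G_{k-1}},\S{G_{k}})\le M\bigl(1+\frac{3}{2}Ke^{K}\bigr)\dej+e^{K}\eej$.

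Fix $k$, set $T_{k}=\dej/\epsilon$ and interpose a finer intermediate $G^{*}_{k-1}$ obtained from $G_{k-1}$ by replacing only its $k$-th summand $F_{k}$ by its shift-average $F_{k,T_{k}}$. The passage from $G^{*}_{k-1}$ to $G_{k}$ is immediate: hypothesis \eqref{eq:art_bnd_i} rewrites as $d_{H}(F_{k,T_{k}}(t,x),\bar{F}_{k}(x))\le\eej$, which lifts to $d_{H}(G^{*}_{k-1}(t,x),G_{k}(t,x))\le\eej$ by support-function additivity under Minkowski sums, and Corollary \ref{cor:filippov} then yields $d_{H}(\S{G^{*}_{k-1}},\S{G_{k}})\le e^{K}\eej$. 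The remaining task is to bound $d_{H}(\S{G_{k-1}},\S{G^{*}_{k-1}})\le M\bigl(1+\frac{3}{2}Ke^{K}\bigr)\dej$: a version of Lemma \ref{lem:key_lemma} in which only one summand of the vector field is partial-averaged.

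To prove this variant I would rework the proof of Lemma \ref{lem:key_lemma}, reorganized so that the common part $H_{k}=G_{k-1}-F_{k}=G_{k}-\bar{F}_{k}$ costs no Lipschitz budget. For the first direction, given $\x\in\S{G_{k-1}}$, use a measurable selection to split $\dxx{t}=v(t)+w(t)$ with $v(t)\in\epsilon H_{k}(t,\xx{t})$ and $w(t)\in\epsilon F_{k}(t,\xx{t})$, and define the approximation $\tilde{x}(t)=x_{0}+\int_{0}^{t}(v(s)+w_{T_{k}}(s))ds$, where $w_{T_{k}}(s)=\frac{1}{T_{k}}\int_{0}^{T_{k}}w(s+r)dr$. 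A direct computation gives $|\tilde{x}(t)-\xx{t}|\le\epsilon MT_{k}=M\dej$, and the Filippov--Gronwall residual is bounded using the triangle inequality
\[
d\bigl(\dot{\tilde{x}}(t),\epsilon G^{*}_{k-1}(t,\tilde{x}(t))\bigr)\le d\bigl(\dot{\tilde{x}}(t),\epsilon G^{*}_{k-1}(t,\xx{t})\bigr)+\epsilon K\bigl|\xx{t}-\tilde{x}(t)\bigr|,
\]
where the first right-hand term is estimated against the full Minkowski sum: since $v(t)\in\epsilon H_{k}(t,\xx{t})$ already, only the $F_{k}$-fluctuation contributes there, yielding $\frac{1}{2}\epsilon^{2}KMT_{k}$ exactly as in Lemma \ref{lem:key_lemma}, while the second term is $\epsilon^{2}KMT_{k}$. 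Integration over $\ze$ produces residual $\frac{3}{2}KM\dej$, and Filippov--Gronwall delivers a solution of $\dot{z}\in\epsilon G^{*}_{k-1}(t,z)$ within $\frac{3}{2}KM\dej e^{K}$ of $\tilde{x}$. The reverse direction runs the shifting-of-integration-order construction from the second half of Lemma \ref{lem:key_lemma}, applied only to the $F_{k}$-selection, with the same triangle-inequality discipline.

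The main obstacle is the triangle-inequality bookkeeping in the previous paragraph: one must resist the tempting but wasteful split
\[
d\bigl(v+w_{T_{k}},\epsilon H_{k}(t,\tilde{x})+\epsilon F_{k,T_{k}}(t,\tilde{x})\bigr)\le d\bigl(v,\epsilon H_{k}(t,\tilde{x})\bigr)+d\bigl(w_{T_{k}},\epsilon F_{k,T_{k}}(t,\tilde{x})\bigr),
\]
which charges the Lipschitz cost on the common part $H_{k}$ a second time and degrades the constant from $1+\frac{3}{2}Ke^{K}$ to $1+\frac{5}{2}Ke^{K}$. Keeping the Minkowski sum grouped and first comparing $\dot{\tilde{x}}$ to $\epsilon G^{*}_{k-1}(t,\xx{t})$, where $v$ is automatically a legitimate selection, is what lets the $v$-piece be absorbed for free and recovers the same constants as in the original Lemma \ref{lem:key_lemma}.
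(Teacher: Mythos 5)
Your telescoping route is genuinely different from the paper's proof, and it has a real gap: every quantitative tool you invoke is applied to the hybrid fields $G_{k-1}$, $G_{k-1}^{*}$, $G_{k}$ with the constants $M$ and $K$ of the \emph{sum}, but the theorem only assumes that $F=F_{1}+\dots+F_{m}$ satisfies Assumption \ref{ass:Averaging condition}; the individual $F_{j}$, and hence the hybrids in which some summands are replaced by their own averages, need not be bounded by $M$ nor Lipschitz with constant $K$. This is not cosmetic bookkeeping. In your one-summand variant of Lemma \ref{lem:key_lemma}, the step $\left|\tilde{x}\left(t\right)-x^{*}\left(t\right)\right|\le\e MT_{k}$ bounds the selection $w\left(t\right)\in\e F_{k}\left(t,\xx t\right)$ by $\e M$, and the residual $\frac{1}{2}\e^{2}KMT_{k}$ charges the fluctuation of $F_{k}$ alone with the constant $K$; neither quantity is controlled by Assumption \ref{ass:Averaging condition} on the sum, and likewise the Gronwall exponent $e^{K}$ must refer to the Lipschitz constant of $G_{k-1}^{*}$, not of $F$. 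A concrete counterexample: $F_{1}\left(t,x\right)=\left\{ R\sin\left(2\pi t\right)\right\} $, $F_{2}\left(t,x\right)=\left\{ -R\sin\left(2\pi t\right)\right\} $, so $F\equiv\left\{ \mathbf{0}\right\} $, $K=0$ and $M$ may be taken arbitrarily small, while \eqref{eq:art_bnd_i} holds with $\Delta_{j}\left(\e\right)=\e$, $\eta_{j}\left(\e\right)=0$. Here $G_{1}=\bar{F}_{1}+F_{2}=\left\{ -R\sin\left(2\pi t\right)\right\} $ and $d_{H}\left(\S{G_{0}},\S{G_{1}}\right)=\e R/\pi$, which is arbitrarily large compared with your per-step bound $M\left(1+\frac{3}{2}Ke^{K}\right)\Delta_{1}\left(\e\right)+e^{K}\eta_{1}\left(\e\right)\approx0$; so the per-step estimate, and with it the telescoping route to \eqref{eq:bnd_sum}, fails. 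Replacing summands one at a time by their individual averages destroys exactly the cancellations between summands that the stated constants respect; even if one adds the (unstated) hypotheses that each $F_{j}$ is bounded by $M_{j}$ and Lipschitz with constant $K_{j}$, your argument yields a bound with $\sum_{j}M_{j}$ and $\sum_{j}K_{j}$ in place of $M$ and $K$, which is weaker than \eqref{eq:bnd_sum}.

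The paper avoids this by never separating the summands at the trajectory level. It sets $T_{i}=\Delta_{i}\left(\e\right)/\e$ and partial-averages the \emph{entire} field sequentially, $F^{i}\left(t,x\right)=F_{T_{i}}^{i-1}\left(t,x\right)$, so each comparison $d_{H}\left(\S{F^{i-1}},\S{F^{i}}\right)$ is exactly Lemma \ref{lem:key_lemma} applied to a field that inherits $M$ and $K$ from $F$ (partial averaging preserves both), costing $M\left(1+\frac{3}{2}Ke^{K}\right)\Delta_{i}\left(\e\right)$. The summand structure is used only pointwise at the end: by Fubini the $m$ averaging operators commute, the window-$T_{j}$ average of $F_{j}$ is within $\eta_{j}\left(\e\right)$ of $\bar{F}_{j}$ by \eqref{eq:art_bnd_i}, and closeness to a fixed set is preserved under further averaging, so $d_{H}\left(F^{m}\left(t,x\right),\bar{F}\left(x\right)\right)\le\smj\eej$ and Corollary \ref{cor:filippov}, applied to $F^{m}$ and $\bar{F}$ (both satisfying Assumption \ref{ass:Averaging condition} with $M,K$), gives the $e^{K}\smj\eej$ term. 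To repair your argument you would either have to assume individual bounds on the $F_{j}$ and accept degraded constants, or restructure it so that, as in the paper, only partial averages of the full sum ever appear as vector fields.
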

\begin{proof}
To verify this theorem we shall use the following two observations.
Suppose $G\left(t,x\right)$ satisfies Assumption \ref{ass:Averaging condition}
and $\left|d_{H}\left(G\left(s,x\right),\bar{G}\left(x\right)\right)\right|\leq\alpha$,
then $\left|d_{H}\left(G_{T}\left(s,x\right),\bar{G}\left(x\right)\right)\right|\leq\alpha$
for every $T>0$. Also, Fubini's theorem implies that 
\[
\frac{1}{T_{2}}\int_{t}^{t+T_{2}}G_{T_{1}}\left(s,x\right)ds=\frac{1}{T_{1}}\int_{t}^{t+T_{1}}G_{T_{2}}\left(s,x\right)ds,
\]
for every $T_{1},T_{2}>0$, $x\in\D$ and $t\in\R$.

For every $\jm$ we set t $T_{j}=\frac{\dej}{\e}$ and define a sequence
of set-valued mapping, by setting $F_{j}^{0}\left(t,x\right)=\Fj$
and 
\[
F_{j}^{i}\left(t,x\right)=\frac{1}{T_{i}}\int_{t}^{t+T_{i}}F_{j}^{i-1}\left(s,x\right)ds,
\]
for $i=1,\dots,m$. We also set $F^{0}\left(t,x\right)=\Fxt$ and
$F^{i}\left(t,x\right)=\smj F_{j}^{i}\left(t,x\right)$. These set-valued
mappings satisfy $F^{i}\left(t,x\right)=F_{T_{i}}^{i-1}\left(t,x\right)$.
Now by the triangle inequality we bound the approximation error, given
by $d_{H}\left(\SF,\SFB\right)$, by
\begin{equation}
d_{H}\left(\S{F^{m}},\SFB\right)+\smj d_{H}\left(\S{F^{i-1}},\S{F^{i}}\right)\label{eq:gen_add_bnd}
\end{equation}
From the aforementioned observations we conclude that for every $\jm$
and $t\in\ze$ we have that 
\[
d_{H}\left(F_{j}^{m}\left(t,x\right),\bar{F}_{j}\left(x\right)\right)\le d_{H}\left(\frac{1}{T_{j}}\int_{s_{0}}^{s_{0}+T_{j}}F_{j}\left(s,x\right)ds,\bar{F}_{j}\left(x\right)\right)\le\eej.
\]
Thus, we bound $d_{H}\left(F^{m}\left(t,x\right),\bar{F}\left(x\right)\right)\le\smj\eej$
and Corollary \ref{cor:filippov} yields 
\[
d_{H}\left(\S{F^{m}},\S{\bar{F}}\right)\le\smj\eej.
\]
Applying Lemma \ref{lem:key_lemma} to each term in the sum, we conclude
that \eqref{eq:gen_add_bnd} is bounded by $ $
\[
\smj\left(M\left(1+\frac{3}{2}Ke^{K}\right)\dej+e^{K}\eej\right).
\]

\end{proof}
This latter theorem implies one of our main results.
\begin{cor}
\label{cor:multi_freq_sum}Suppose $\Fxt=F_{1}\left(t,x\right)+F_{2}\left(t,x\right)+\dots+F_{m}\left(t,x\right)$
satisfies Assumption \ref{ass:Averaging condition} and for every
$j=1,\dots,m$ the set-valued mapping $\Fj$ is periodic in $t$ with
period $T_{j}$. Then the approximation error of equation \eqref{eq:diffinc}
is bounded by 
\begin{equation}
\e M\left(1+\frac{3}{2}Ke^{K}\right)\smj T_{j}.\label{eq:multi_periodic_bnd}
\end{equation}
In particular the estimation is of order $O\left(\e\right)$.\end{cor}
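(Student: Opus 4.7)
The plan is to derive this corollary as a direct specialization of the additivity result, Theorem \ref{thm:(Additivity-Lemma)-Given-1}. The setup is already perfectly tailored: $F(t,x)$ is presented as a sum of $m$ set-valued mappings, each of which has its own time average $\bar{F}_j(x)$ (existence of $\bar{F}_j$ is automatic from periodicity in $t$ with period $T_j$). So the only task is to exhibit, for each $j$, a pair $\left(\Delta_j(\epsilon),\eta_j(\epsilon)\right)$ witnessing the fluctuation bound \eqref{eq:art_bnd_i}, and to plug these choices into \eqref{eq:bnd_sum}.

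For each $j$ I would set $\Delta_j(\epsilon) = \epsilon T_j$ and $\eta_j(\epsilon) = 0$, so that $\Delta_j(\epsilon)/\epsilon = T_j$ is exactly one period of $F_j(\cdot,x)$. Then the inner quantity in \eqref{eq:art_bnd_i} becomes
\[
\frac{1}{T_j}\int_{s_0}^{s_0+T_j} F_j(s,x)\,ds,
\]
which, by $T_j$-periodicity of $F_j$ in $t$, is independent of $s_0$ and equals $\bar{F}_j(x)$ (the limit defining $\bar{F}_j$ in \eqref{eq:Average def} stabilizes after a single period). Hence the Hausdorff distance in \eqref{eq:art_bnd_i} is identically zero for this choice, so the hypothesis of Theorem \ref{thm:(Additivity-Lemma)-Given-1} is satisfied trivially.

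Substituting $\Delta_j(\epsilon)=\epsilon T_j$ and $\eta_j(\epsilon)=0$ into the bound \eqref{eq:bnd_sum} yields
\[
M\left(1+\tfrac{3}{2}Ke^{K}\right)\sum_{j=1}^{m}\epsilon T_{j} + e^{K}\sum_{j=1}^{m}0 \;=\; \epsilon M\left(1+\tfrac{3}{2}Ke^{K}\right)\sum_{j=1}^{m}T_{j},
\]
which is exactly the bound \eqref{eq:multi_periodic_bnd}; the $O(\epsilon)$ assertion follows immediately since $M$, $K$ and the $T_j$ do not depend on $\epsilon$. There is no genuine obstacle here: the work has been done in Theorem \ref{thm:(Additivity-Lemma)-Given-1} (and ultimately in Lemma \ref{lem:key_lemma}), and the role of the corollary is merely to observe that periodicity gives the sharpest possible input data ($\eta_j=0$, $\Delta_j$ linear in $\epsilon$) to the additivity mechanism.
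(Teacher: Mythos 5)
Your proposal is correct and follows exactly the paper's own argument: the paper likewise proves the corollary by setting $\Delta_{j}\left(\epsilon\right)=\epsilon T_{j}$ and $\eta_{j}\left(\epsilon\right)=0$ and invoking Theorem \ref{thm:(Additivity-Lemma)-Given-1}. Your additional remark that periodicity makes the one-period average independent of $s_{0}$ and equal to $\bar{F}_{j}\left(x\right)$ is the (implicit) justification the paper leaves to the reader.
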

\begin{proof}
For every $\jm$ set $\dej=\e T_{j}$ and $\eej=0$, and apply Theorem
\ref{thm:(Additivity-Lemma)-Given-1}.
\end{proof}
We now extend our result to multi-frequency differential inclusions
where $\Fxt$ is of the form $\Fxt=\left(F_{1}\left(t,x\right),F_{2}\left(t,x\right),\dots,F_{d}\left(t,x\right)\right)$,
where each of its components ($\Fj$) satisfies a bound of the form
\eqref{eq:art_bnd_i}. This extension is crucial in the following
section where our results are applied to control systems, since Theorem
\ref{thm:(Additivity-Lemma)-Given-1} cannot be applied to such set-valued
mappings, as they might not be representable by a sum of periodic
set-valued mappings. This can be seen in the following example.
\begin{example}
\label{ex:non_dec}Let $U=\left[0,1\right]\times\left[0,2\pi\right]$.
The set-valued mapping
\[
F\left(t,x\right)=F\left(t\right)=\left\{ \left(7\cos t+u_{1}\cos u_{2},7\sin\pi t+u_{1}\sin u_{2}\right)\in\R^{2}|\left(u_{1},u_{2}\right)\in U\right\} ,
\]
is not the Minkovski sum of its components, namely, 
\[
F\left(t,x\right)\neq\left\{ \left(7\cos t+u_{1}\cos u_{2},0\right)|\left(u_{1},u_{2}\right)\in U\right\} +\left\{ \left(0,7\sin\pi t+u_{1}\sin u_{2}\right)|\left(u_{1},u_{2}\right)\in U\right\} ,
\]
and it is easy to see that there is no such representation.

Applying the same line of proof as in Theorem \ref{thm:(Additivity-Lemma)-Given-1}
we conclude the following result.\end{example}
\begin{thm}
\label{thm:nD_bnd}Suppose $\Fxt=\left(F_{1}\left(t,x\right),F_{2}\left(t,x\right),\dots,F_{d}\left(t,x\right)\right)$
satisfies Assumption \ref{ass:Averaging condition}, and for every
$j=1,\dots,m$ the set-valued mapping $\Fj$ has a well defined average
$\bar{F}_{j}\left(x\right)$. If for every $\jm$ there exists $\left(\dej,\eej\right)$
satisfying \ref{eq:art_bnd_i}, then the approximation error of equation
\eqref{eq:diffinc} is bounded by 
\[
M\left(1+\frac{3}{2}Ke^{K}\right)\sum_{j=1}^{d}\dej+e^{K}\sqrt{\sum_{j=1}^{d}\left(\eej\right)^{2}}.
\]
In particular the estimation is of order $O\left(\e\right)$.\end{thm}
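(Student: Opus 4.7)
The plan is to mimic the proof of Theorem \ref{thm:(Additivity-Lemma)-Given-1}, with the Minkowski-sum decomposition $F=F_{1}+\cdots+F_{m}$ replaced by the Cartesian-product decomposition $F=F_{1}\times\cdots\times F_{d}$ implicit in the tuple notation. The two structural facts that made the additive proof telescope were that the Aumann integral commutes with Minkowski sum, and that $d_{H}$ of Minkowski sums is bounded by the sum of the component $d_{H}$'s. Their product analogues are that the Aumann integral commutes with Cartesian products (immediate from the selection definition, since a measurable selection of a product is the product of measurable selections of the factors), and that $d_{H}$ of product sets is controlled by the $\ell^{2}$-aggregate of the component $d_{H}$'s. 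With these two substitutions the rest of the argument is an almost verbatim transcription.

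Explicitly, I would set $T_{j}=\dej/\e$ and define iteratively $F_{j}^{0}=F_{j}$, $F_{j}^{i}(t,x)=\frac{1}{T_{i}}\int_{t}^{t+T_{i}}F_{j}^{i-1}(s,x)\,ds$, and $F^{i}=F_{1}^{i}\times\cdots\times F_{d}^{i}$ for $i=1,\dots,d$. Because the Aumann integral commutes with Cartesian products, $F^{i}=F_{T_{i}}^{i-1}$, so Lemma \ref{lem:key_lemma} applied at each step gives
\[
d_{H}\bigl(\S{F^{i-1}},\S{F^{i}}\bigr)\le M\bigl(1+\tfrac{3}{2}Ke^{K}\bigr)\dej,
\]
and summing over $i=1,\dots,d$ reproduces the first term of the stated bound.

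For the remaining piece $d_{H}(\S{F^{d}},\SFB)$, I would invoke the two auxiliary observations already used in the proof of Theorem \ref{thm:(Additivity-Lemma)-Given-1}: monotonicity of the pointwise Hausdorff bound under further averaging, and the Fubini identity $\frac{1}{T_{2}}\int_{t}^{t+T_{2}}G_{T_{1}}(s,x)\,ds=\frac{1}{T_{1}}\int_{t}^{t+T_{1}}G_{T_{2}}(s,x)\,ds$. Together they yield $d_{H}(F_{j}^{d}(t,x),\bar{F}_{j}(x))\le\eej$ for every $j$. The genuinely new ingredient is the support-function bound for convex Cartesian products $A=A_{1}\times\cdots\times A_{d}$ and $B=B_{1}\times\cdots\times B_{d}$: since $h_{A}(\xi)=\sum_{j}h_{A_{j}}(\xi_{j})$, the triangle inequality combined with Cauchy--Schwarz on the unit direction $\xi\in\partial B(\mathbf{0},1)$ gives
\[
d_{H}(A,B)\le\sqrt{\textstyle\sum_{j}d_{H}(A_{j},B_{j})^{2}}.
\]
Hence $d_{H}(F^{d}(t,x),\Fbx)\le\sqrt{\sum_{j}\eej^{2}}$ uniformly in $t,x$, so that Corollary \ref{cor:filippov} contributes $e^{K}\sqrt{\sum_{j}\eej^{2}}$; one last triangle inequality assembles both pieces into the claimed estimate, and the $O(\e)$ consequence follows as in Corollary \ref{cor:multi_freq_sum}.

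The only substantive obstacle is recognising the $\ell^{2}$ inequality for Cartesian products via support functions and Cauchy--Schwarz; once that is in hand, the iteration and telescoping are a routine translation of the additive case.
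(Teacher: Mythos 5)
Your outline does track the paper's own argument: set $T_{j}=\dej/\e$, iterate the partial averages, apply Lemma \ref{lem:key_lemma} once per window to get the $\sum_{j}\dej$ term, use the monotonicity and Fubini observations from Theorem \ref{thm:(Additivity-Lemma)-Given-1} to get $d_{H}\left(F_{j}^{d}\left(t,x\right),\bar{F}_{j}\left(x\right)\right)\le\eej$, aggregate in $\ell^{2}$, and finish with Corollary \ref{cor:filippov}. Your support-function/Cauchy--Schwarz derivation of $d_{H}\left(A,B\right)\le\sqrt{\sum_{j}d_{H}\left(A_{j},B_{j}\right)^{2}}$ is a correct justification of the step the paper dismisses as ``properties of the Euclidean norm'' --- \emph{provided} $A$ and $B$ really are the Cartesian products of the $A_{j},B_{j}$. (Incidentally, the $\dej$ half needs no decomposition at all: you can define $F^{i}=F_{T_{i}}^{i-1}$ directly for the full map and apply the key lemma, so the product structure is only invoked at the very end.)

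The gap is in the hypothesis you quietly added. You read $\Fxt=\left(F_{1}\left(t,x\right),\dots,F_{d}\left(t,x\right)\right)$ as the Cartesian product $F_{1}\times\cdots\times F_{d}$, and both of your structural facts --- that a measurable selection of a product map is a tuple of selections of the factors, and that $h_{F}\left(\xi\right)=\sum_{j}h_{F_{j}}\left(\xi_{j}\right)$ --- hold \emph{only} for products. But this theorem exists precisely to cover maps that are not decomposable: in Example \ref{ex:non_dec}, and in the control application where the auxiliary map of \eqref{eq:decop_eq} is fed through Corollaries \ref{cor:multi_frec_entry_diffinc} and \ref{cor:multi_freq_dilute}, the entries are coupled through a common control, so $\Fxt$ is neither a Minkowski sum nor a Cartesian product of its entries, and ``$F_{j}$'' can only mean the $j$-th coordinate projection. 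For projections your final step fails: knowing that every coordinate projection of $F^{d}\left(t,x\right)$ is $\eej$-close to the corresponding projection of $\Fbx$ does not bound $d_{H}\left(F^{d}\left(t,x\right),\Fbx\right)$, since distinct convex sets can have identical projections (the two diagonal segments $\left\{ \left(s,s\right):\left|s\right|\le1\right\} $ and $\left\{ \left(s,-s\right):\left|s\right|\le1\right\} $); moreover, for a coupled map the iterated one-period averages need not reproduce $\Fbx$ exactly even when every entry is periodic, as a support-function computation with a set of the form $\left\{ u\left(\cos2\pi t,\cos\left(2\pi t/T_{2}\right)\right):u\in\left[0,1\right]\right\} $ shows. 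So your argument proves the product case --- where, to be fair, the paper's one-line proof is then also rigorous and identical to yours --- but not the statement in the generality in which the paper states it and then uses it in Section \ref{sec:Application-in-multi-frequency}; closing the coupled case needs an additional idea that neither your write-up nor the paper's sketch supplies.
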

\begin{proof}
The proof follows from the proof of Theorem \ref{thm:(Additivity-Lemma)-Given-1},
with the exception here we use the properties of the Euclidian norm
to bound 
\[
d_{H}\left(F^{d}\left(t,x\right),\bar{F}\left(x\right)\right)\le\sqrt{\sum_{j=1}^{d}\left[d_{H}\left(F_{j}^{d}\left(t,x\right),\bar{F}_{j}\left(x\right)\right)\right]^{2}}\le\sqrt{\sum_{j=1}^{d}\left(\eej\right)^{2}}.
\]

\end{proof}
This result immediately implies the following corollary on averaging
inclusions each of entry of $\Fxt$ has a different period.
\begin{cor}
\label{cor:multi_frec_entry_diffinc}Suppose $\Fxt=\left(F_{1}\left(t,x\right),F_{2}\left(t,x\right),\dots,F_{d}\left(t,x\right)\right)$
satisfies Assumption \ref{ass:Averaging condition}, and that for
every $j=1,\dots,m$ its $j$'th entry $\Fj$ is periodic in $t$
with period $T_{j}$. Then the approximation error of equation \eqref{eq:diffinc}
is bounded by \eqref{eq:multi_periodic_bnd} with $m=d$. In particular
the estimation is of order $O\left(\e\right)$.
\end{cor}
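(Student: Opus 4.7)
The plan is to invoke Theorem \ref{thm:nD_bnd} directly, with a careful choice of the pair $\left(\dej,\eej\right)$ for each coordinate $F_{j}\pd$. Since $F_{j}\left(t,x\right)$ is periodic in $t$ with period $T_{j}$, its time average over any interval whose length is an integer multiple of $T_{j}$ equals $\bar{F}_{j}\left(x\right)$ exactly. So I would take $\dej=\e T_{j}$, which corresponds to averaging $F_{j}$ over an interval of length $T_{j}$, and set $\eej=0$.

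The next step is to verify \eqref{eq:art_bnd_i} with these choices. With $\dej/\e=T_{j}$, the left-hand side of \eqref{eq:art_bnd_i} becomes the Hausdorff distance between $\frac{1}{T_{j}}\int_{s_{0}}^{s_{0}+T_{j}}F_{j}\left(s,x\right)ds$ and $\bar{F}_{j}\left(x\right)$. By $T_{j}$-periodicity, this integral is independent of $s_{0}$ and coincides with $\bar{F}_{j}\left(x\right)$, so the distance is indeed $0=\eej$ for every $s_{0}\ge0$ and every $x\in\D$.

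With the hypotheses of Theorem \ref{thm:nD_bnd} in hand, I would plug in: the first summand gives $M\left(1+\frac{3}{2}Ke^{K}\right)\sum_{j=1}^{d}\e T_{j}$, while the second summand $e^{K}\sqrt{\sum_{j=1}^{d}\left(\eej\right)^{2}}$ vanishes. This is exactly the bound \eqref{eq:multi_periodic_bnd} with $m=d$, and the order $O\left(\e\right)$ follows immediately since the sum $\sum_{j=1}^{d}T_{j}$ is a fixed constant depending only on the periods.

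There is no real obstacle here; the result is a straightforward specialization of Theorem \ref{thm:nD_bnd}. The only point worth emphasizing in the writeup is the observation that periodicity of $F_{j}$ with period $T_{j}$ forces the finite-time average over any interval of length $T_{j}$ to coincide with $\bar{F}_{j}$ independently of the starting time $s_{0}$, which is precisely what allows the $\eej$-term to be taken as zero.
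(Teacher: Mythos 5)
Your proposal is correct and follows exactly the route the paper intends: the corollary is stated as an immediate specialization of Theorem \ref{thm:nD_bnd}, obtained by choosing $\Delta_{j}\left(\epsilon\right)=\epsilon T_{j}$ and $\eta_{j}\left(\epsilon\right)=0$, with periodicity guaranteeing that the finite-time average over any interval of length $T_{j}$ equals $\bar{F}_{j}\left(x\right)$ independently of $s_{0}$ (just as in the paper's proof of Corollary \ref{cor:multi_freq_sum}). Nothing further is needed.
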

One can also conclude from the proof of Theorem \ref{thm:(Additivity-Lemma)-Given-1}
the following estimates which, in some cases, may improve the estimates
from Corollaries \ref{cor:multi_freq_sum} and \ref{cor:multi_frec_entry_diffinc}.
\begin{cor}
\label{cor:multi_freq_dilute}Suppose that either the conditions of
Corollary \ref{cor:multi_freq_sum} hold and
\[
\left\{ T_{1},T_{2},\dots,T_{m}\right\} \subset\left\{ T_{1},T_{2},\dots,T_{N}\right\} ,
\]
or the conditions of Corollary \ref{cor:multi_frec_entry_diffinc}
hold and
\[
\left\{ T_{1},T_{2},\dots,T_{d}\right\} \subset\left\{ T_{1},T_{2},\dots,T_{N}\right\} .
\]
Then the approximation error of equation \eqref{eq:diffinc} is bounded
by \eqref{eq:multi_periodic_bnd} with $m=N$.
\end{cor}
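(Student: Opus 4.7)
The plan is to re-run the proof of Theorem~\ref{thm:(Additivity-Lemma)-Given-1} using the enlarged list $T_{1},\dots,T_{N}$ on the right hand side of the inclusion as the sequence of averaging windows, rather than the shorter list of original periods. The key reason this is admissible is that in that proof each window $T_{i}$ plays a purely local role: it serves only to annihilate the time-dependence of whatever summand (or entry) happens to have period $T_{i}$. Extra windows in the list cost only an additional $\epsilon M(1+\tfrac{3}{2}Ke^{K})T_{i}$ per window, never break the iteration, and when no summand has period $T_{i}$ they simply average a mapping that is already constant in $t$ at that coordinate.

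Concretely, I would set $F^{0}=F$ and define iteratively
\[
F^{i}(t,x)=\frac{1}{T_{i}}\int_{0}^{T_{i}}F^{i-1}(t+s,x)\,ds,\qquad i=1,\dots,N.
\]
$N$ successive applications of Lemma~\ref{lem:key_lemma} together with the triangle inequality for $d_{H}$ give
\[
d_{H}\!\left(\S F,\S{F^{N}}\right)\le\epsilon M\bigl(1+\tfrac{3}{2}Ke^{K}\bigr)\sum_{i=1}^{N}T_{i}.
\]
What remains is to check that $F^{N}=\bar F$, after which Corollary~\ref{cor:filippov} closes the gap between $\S{F^{N}}$ and $\S{\bar F}$ at zero additional cost. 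In the first case, where $F=F_{1}+\cdots+F_{m}$, the finite-window average distributes over the Minkowski sum, so $F^{i}_{j}$ is the result of applying the $i$-th averaging to $F^{i-1}_{j}$. A one-line change of variable shows that averaging a $T_{j}$-periodic mapping over any window preserves $T_{j}$-periodicity. By hypothesis $T_{j}\in\{T_{1},\dots,T_{N}\}$, so at the step indexed by this value the summand $F_{j}$ is replaced by $\bar F_{j}$, and subsequent averagings leave it unchanged. Hence $F^{N}_{j}=\bar F_{j}$ for every $j$ and $F^{N}=\bar F$.

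The second case proceeds identically but coordinate by coordinate, using the Euclidean bound from the proof of Theorem~\ref{thm:nD_bnd}; each entry is annihilated at the step indexed by its own period, and since every entry reaches $\bar F_{j}$ the vector-valued $F^{N}$ coincides with $\bar F$. The main point that deserves a careful writeup is the periodicity-preservation claim---that the finite-window average over an arbitrary window does not destroy the $T_{j}$-periodicity of a $T_{j}$-periodic mapping---since it is precisely what makes the dilution procedure compatible with the inductive structure of the original proof and what guarantees that only the $N$ listed windows contribute to the final bound.
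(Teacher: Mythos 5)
Your proposal is correct and is essentially the paper's own argument: the paper offers no separate proof, deriving the corollary exactly as you do by re-running the sequential sliding averages of the proof of Theorem \ref{thm:(Additivity-Lemma)-Given-1} (and of Theorem \ref{thm:nD_bnd} in the entry-wise case) over the $N$ listed windows, each step costing $\epsilon M\left(1+\frac{3}{2}Ke^{K}\right)T_{i}$ by Lemma \ref{lem:key_lemma}, and closing with Corollary \ref{cor:filippov} at zero cost since all $\eta_{j}(\epsilon)=0$. The periodicity-preservation point you flag is handled in the paper by the Fubini observation that the slidings commute, so the $T_{j}$-window may be viewed as acting first on the $T_{j}$-periodic term, which also settles the (implicit) fact that the constant produced is indeed $\bar{F}_{j}$.
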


\begin{cor}
\label{cor:per_full_gen}Suppose $\Fxt=F_{1}\left(t,x\right)+F_{2}\left(t,x\right)+\dots+F_{m}\left(t,x\right)$
satisfies Assumption \ref{ass:Averaging condition} and for every
$j=1,\dots,m$ the $i$'th component of $\Fj$ is periodic in $t$
with period $T_{j,i}$. If 
\[
\left\{ T_{i,j}|\jm,i=1,\dots,d\right\} \subset\left\{ T_{1},\dots,T_{N}\right\} 
\]
then the approximation error of equation \eqref{eq:diffinc} is bounded
by \eqref{eq:multi_periodic_bnd} with $m=N$. In particular, it is
of order $O\left(\e\right)$.
\end{cor}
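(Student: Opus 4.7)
The plan is to adapt the iterative averaging construction from the proof of Theorem \ref{thm:(Additivity-Lemma)-Given-1}, but to cycle through the $N$ distinct periods $T_{1},\dots,T_{N}$ rather than through the $m$ summands; this is the same dilution device used in Corollary \ref{cor:multi_freq_dilute}, now folded together with the entrywise reasoning that underlies Theorem \ref{thm:nD_bnd}.

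Concretely, I would set $\Delta_{k}(\e)=\e T_{k}$ for $k=1,\dots,N$ and define a sequence of set-valued mappings by
\[
F^{0}(t,x)=F(t,x),\qquad F^{k}(t,x)=\frac{1}{T_{k}}\int_{0}^{T_{k}}F^{k-1}(t+s,x)\,ds,\quad k=1,\dots,N.
\]
Since Aumann integration distributes over Minkowski sums of bounded, convex-valued correspondences, $F^{k}=\smj F_{j}^{k}$, where $F_{j}^{k}$ denotes the same iteration applied to $F_{j}$ alone. The key claim is that $F^{N}(t,x)=\bar{F}(x)$. For each summand index $j$ and entry index $i$, the $i$-th entry of $F_{j}$ is $T_{j,i}$-periodic and, by hypothesis, $T_{j,i}=T_{k^{*}}$ for some $k^{*}\in\{1,\dots,N\}$. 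Averaging a $T_{j,i}$-periodic function over a window of any length preserves $T_{j,i}$-periodicity of the resulting entry, so through stages $\ell<k^{*}$ this entry of $F_{j}^{\ell}$ remains $T_{j,i}$-periodic. At stage $k^{*}$, averaging over one full period collapses it to the constant $\bar{F}_{j,i}(x)$, and every later averaging of a constant is trivial. After $N$ steps every entry of every summand has been reduced to its mean, so $F^{N}=\smj\bar{F}_{j}=\bar{F}$.

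Given $F^{N}=\bar{F}$, I would telescope via the triangle inequality,
\[
d_{H}\!\left(\SF,\SFB\right)\le d_{H}\!\left(\S{F^{N}},\SFB\right)+\sum_{k=1}^{N}d_{H}\!\left(\S{F^{k-1}},\S{F^{k}}\right),
\]
where the first term vanishes by the claim. The lemma immediately preceding Lemma \ref{lem:key_lemma} guarantees that each $F^{k-1}$ inherits Assumption \ref{ass:Averaging condition}, and $(F^{k-1})_{T_{k}}=F^{k}$ by construction, so Lemma \ref{lem:key_lemma} bounds the $k$-th summand by $\e M T_{k}(1+\frac{3}{2}Ke^{K})$. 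Summing over $k$ yields $\e M(1+\frac{3}{2}Ke^{K})\sum_{k=1}^{N}T_{k}$, i.e.\ bound \eqref{eq:multi_periodic_bnd} with $m=N$, which is $O(\e)$.

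The principal obstacle is the set-valued bookkeeping behind the identity $F^{N}=\bar{F}$: one must verify that Aumann integration genuinely commutes with the Minkowski decomposition into the summands $F_{j}$ and, within each summand, respects the periodicity hypothesis on the individual entries in the precise sense used by Theorem \ref{thm:nD_bnd}. Once these commutation properties are established, the rest of the argument is a mechanical $N$-fold application of Lemma \ref{lem:key_lemma}, exactly parallel to the strategy of Theorems \ref{thm:(Additivity-Lemma)-Given-1} and \ref{thm:nD_bnd}.
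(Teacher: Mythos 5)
Your route is the one the paper itself intends for this corollary (it gives no separate proof): iterate the finite-window average over the distinct periods $T_{1},\dots,T_{N}$, use the fact that window averaging preserves Assumption \ref{ass:Averaging condition} with the same $M$ and $K$, apply Lemma \ref{lem:key_lemma} once per period, and telescope. That part of your argument is sound, and for the portion of the hypothesis that concerns whole summands (each $F_{j}$ periodic with a single period, as in Corollary \ref{cor:multi_freq_dilute}) it is exact: the Aumann integral is additive over Minkowski sums, a $T_{k}$-periodic summand collapses to its average precisely at stage $k$, and constants are unaffected afterwards, so those summands contribute nothing to the final comparison term.

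The genuine gap is the inference ``every entry of every summand has been reduced to its mean, so $F^{N}=\smj\bar{F}_{j}=\bar{F}$.'' When the values of a summand $F_{j}$ are not Cartesian products of their coordinate projections, ``the $i$'th component of $F_{j}$ is $T_{j,i}$-periodic'' can only mean periodicity of the coordinate projections (or of a parametrization, as in Example \ref{ex:non_dec}), and constancy of all projections of $F_{j}^{N}$ does not force $F_{j}^{N}(t,x)=\bar{F}_{j}(x)$: projection does commute with the Aumann integral, so the projections of $F_{j}^{N}$ are constant, but the set itself can keep oscillating because the support function couples the coordinates nonlinearly. Concretely, with $d=2$, $m=1$, take $F\left(t\right)=\mathrm{conv}\left\{ \left(0,0\right),\left(\cos\left(2\pi t\right),\cos\left(2\pi t/\sqrt{2}\right)\right)\right\} $: its two projections have periods $1$ and $\sqrt{2}$, yet $h_{F\left(t\right)}\left(\theta\right)=\max\left(0,\cos\left(2\pi t\right)\cos\theta+\cos\left(2\pi t/\sqrt{2}\right)\sin\theta\right)$ contains Bohr frequencies $k+l/\sqrt{2}$ with $k,l\neq0$ (e.g. $3-1/\sqrt{2}$) which are annihilated by neither a window of length $1$ nor one of length $\sqrt{2}$, so $F^{2}\left(t,x\right)\neq\bar{F}\left(x\right)$ and your last term $d_{H}\left(\S{F^{N}},\SFB\right)$ is not zero (nor small with $\e$) in this generality. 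The collapse you want is valid when each $F_{j}\left(t,x\right)$ is a product of scalar set-valued components (or is single valued); otherwise one needs a substitute estimate in place of the exact identity. In fairness, the same structural assumption is silently needed in the paper's own Theorem \ref{thm:nD_bnd}, whose entrywise bound $d_{H}\left(F^{d},\bar{F}\right)\le\sqrt{\sum_{j}\left[d_{H}\left(F_{j}^{d},\bar{F}_{j}\right)\right]^{2}}$ also fails for general convex sets with prescribed projections, so your proposal reproduces the paper's argument, gap included; but as a self-contained proof of the corollary in the stated set-valued generality it does not go through without adding that product (or otherwise decoupled) structure.
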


\section{\label{sec:Application-in-multi-frequency}Application in multi-frequency
Control Systems}

We now present an application of the bound obtained in the previous
section to control systems and establish an $O\left(\e\right)$ bound
for multi-frequency control systems. Following which, we provide an
example of our main result.

We shall consider the approximation of the solution-set of the control
system of the form
\begin{equation}
\dot{x}=\e g\left(t,x,u\right),\; x\left(0\right)=x_{0},\label{eq:multi-freq_cont-1}
\end{equation}
where 
\[
g\left(t,x,u\right)=g_{1}\left(t,x,u\right)+g_{2}\left(t,x,u\right)+\cdots+g_{m}\left(t,x,u\right),
\]
and each $g_{j}\left(t,x,u\right)$ is periodic in $t$ with period
$T_{j}$. We denote its solution set by $\S G$, which we approximate
by $\S{\bar{G}}$, the solution set of the corresponding averaged
system
\begin{equation}
\dot{y}\in\e\bar{G}\left(y\right)=\lim_{T\rightarrow\infty}\frac{\e}{T}\int_{0}^{T}G\left(y,s\right)ds=\lim_{T\rightarrow\infty}\frac{\e}{T}\int_{0}^{T}\left\{ g\left(s,y,u\right)|u\in U\right\} ds,\label{eq:cnt_avg}
\end{equation}
where we define
\[
G\left(t,x\right)=\left\{ g\left(t,x,u\right)|u\in U\right\} .
\]

We assume our system satisfies the following conditions.
\begin{assumption}
\label{ass:control}We assume that $U\subset\mathbb{R}^{k}$ is compact
and that for every $\jm$ the following conditions holds:
\begin{enumerate}
\item $g_{j}\left(t,x,u\right):\mathbb{R}\times\D\times U\rightarrow\Rd$
is bounded in norm by $M_{j}$.
\item $g_{j}\left(t,x,u\right)$ is measurable in $t$ and $u$.
\item $g_{j}\left(t,x,u\right)$ satisfies Lipschitz conditions in $x$
uniformly in $t$ and $u$, with a Lipschitz constant $K_{j}$.
\end{enumerate}
\end{assumption}
Our assumptions imply that $G\left(t,x\right)$ satisfies Assumption
\eqref{ass:Averaging condition} (expect for being convex valued,
which by Remark \ref{rem: non-cnv} is not essential in our proofs),
with $M=\smj M_{j}$ and $K=\smj K_{j}$, where the periodicity of
the functions $g_{j}\left(t,x,u\right)$ implies that the average
of $G\left(t,x\right)$ exists.

Our main result for this section is as follows.
\begin{thm}
\label{thm:multi_freq_cnt}Suppose $g(t,x,u)=g_{1}\left(t,x,u\right)+g_{2}\left(t,x,u\right)+\cdots+g_{m}\left(t,x,u\right)$
satisfies the condition of Assumption \ref{ass:control}, and for
every $\jm$ the function $g_{j}\left(t,x,u\right)$ is periodic in
$t$ with period $T_{j}$. Then the approximation error of \eqref{eq:multi-freq_cont-1}
is bounded by
\[
\e\sqrt{m}M_{H}\left(1+\frac{3}{2}K_{H}e^{K_{H}}\right)\sum_{j=1}^{N}T_{j},
\]
where $M_{H}=\sqrt{\smj M_{j}^{2}}$ and $K_{H}=\sqrt{m\smj K_{j}^{2}}$.
In particular, it is of order $O\left(\e\right)$.
\end{thm}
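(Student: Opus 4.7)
The plan is to reduce the control-system problem to a multi-frequency differential inclusion in an enlarged state space, and then invoke the entrywise version of the averaging result already established (specifically Corollary~\ref{cor:multi_freq_dilute}). The key obstruction to a direct application of Theorem~\ref{thm:(Additivity-Lemma)-Given-1} is that, as Example~\ref{ex:non_dec} shows, $G(t,x)=\{g(t,x,u):u\in U\}$ is generally not a Minkowski sum of the sets $G_{j}(t,x)=\{g_{j}(t,x,u):u\in U\}$, because the control $u$ is shared across all $m$ terms. We sidestep this by lifting.

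First I would introduce the lifted set-valued mapping
\[
\tilde{G}(t,x)=\{(g_{1}(t,x,u),g_{2}(t,x,u),\dots,g_{m}(t,x,u)):u\in U\}\subset\R^{md},
\]
together with the summation map $\sigma:\R^{md}\to\Rd$, $\sigma(\tilde{x}_{1},\dots,\tilde{x}_{m})=\sum_{j=1}^{m}\tilde{x}_{j}$. Every solution $x(\cdot)$ of \eqref{eq:multi-freq_cont-1} lifts to a solution $\tilde{x}(\cdot)$ of the inclusion $\dot{\tilde{x}}\in\e\tilde{G}(t,x_{0}+\sigma(\tilde{x}))$ with $\tilde{x}(0)=0$, by setting $\tilde{x}_{j}(t)=\int_{0}^{t}\e g_{j}(s,x(s),u(s))ds$; conversely every solution of the lifted inclusion descends to a solution of the control system via $x=x_{0}+\sigma(\tilde{x})$ by a measurable selection of $u$. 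The same lifting/projection correspondence passes to the averaged equations, since the linear map $\sigma$ commutes with time averaging and hence $\sigma(\bar{\tilde{G}}(x))=\bar{G}(x)$. Because $\sigma$ is $\sqrt{m}$-Lipschitz (by Cauchy--Schwarz), the Hausdorff distance between the original and averaged solution sets in $\Rd$ is at most $\sqrt{m}$ times the analogous distance in the lifted space $\R^{md}$.

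I would next verify that the lifted inclusion, viewed as a set-valued mapping of $\tilde{x}\in\R^{md}$, satisfies Assumption~\ref{ass:Averaging condition} with bound $M_{H}=\sqrt{\smj M_{j}^{2}}$ and Lipschitz constant $K_{H}=\sqrt{m\smj K_{j}^{2}}$; the extra $\sqrt{m}$ in $K_{H}$ comes from the fact that $\tilde{G}$ depends on $\tilde{x}$ only through the $\sqrt{m}$-Lipschitz map $\sigma$, while the naive componentwise Lipschitz constant of $\tilde{g}(t,\cdot,u)$ is $\sqrt{\smj K_{j}^{2}}$. Crucially, each of the $md$ scalar coordinates of $\tilde{G}(t,x)$ is periodic in $t$ with period $T_{j}$, where $j$ is the block to which the coordinate belongs, so the set of distinct periods occurring among the $md$ coordinates is exactly $\{T_{1},\dots,T_{m}\}$.

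Finally, I would apply Corollary~\ref{cor:multi_freq_dilute} (the entrywise version, with coordinate periods drawn from a finite pool of size $m$) to the lifted inclusion, obtaining an approximation error in $\R^{md}$ bounded by $\e M_{H}(1+\frac{3}{2}K_{H}e^{K_{H}})\sum_{j=1}^{m}T_{j}$. Projecting back to $\Rd$ via $\sigma$ incurs the expected $\sqrt{m}$ factor and produces the stated bound. The main technical obstacle is making the lifting correspondence precise, particularly the converse direction: producing the lifted velocity from an arbitrary solution of the averaged equation in $\Rd$ requires a measurable-selection argument applied to the set-valued map $y\mapsto\{v\in\bar{\tilde{G}}(y):\sigma(v)=\dot{y}\}$. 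Once this bookkeeping is settled, the remainder of the argument is a direct invocation of the machinery of Section~\ref{sec:Averaging-differential-Inclusion}.
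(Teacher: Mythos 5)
Your proposal is correct and follows essentially the same route as the paper: the lifted inclusion in $\R^{md}$ with the $\sqrt{m}$-Lipschitz summation map is exactly the paper's decoupled auxiliary system $\dot{z}_{j}=\e g_{j}\left(t,\Phi\left(z\right),u\right)$, with the same constants $M_{H}$ and $K_{H}$, the same invocation of Corollary \ref{cor:multi_freq_dilute} on the blockwise periods $T_{1},\dots,T_{m}$, and the same $\sqrt{m}$ loss when projecting back. The only cosmetic difference is that you place $x_{0}$ inside the argument via $x_{0}+\sigma\left(\tilde{x}\right)$ rather than in the first block of the initial condition, which changes nothing.
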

The difficulty in applying our results to \eqref{eq:multi-freq_cont-1}
lies in the fact that all the $g_{j}\left(t,x,u\right)$'s employ
the same control $u$, thus one cannot necessarily write $G\left(t,x\right)$
as a sum of periodic set-valued mapping (see Example \ref{ex:non_dec}),
and our results cannot trivially be applied. Instead, we shall ``decouple''
the periods in the system, by splitting the multi-frequency system
to a system of $m$ coupled periodic equations, each having a different
period, to which we apply our bounds. Our auxiliary system is of dimension
$md$, and it contains $m$ subsystems, each of dimension $d$ having
a periodic vector field. In order that this system represents the
solution of the original equation, we must couple all the the new
variables. 

We represent a vector in $\R^{md}$ by $z=\left(z_{1},z_{2},\dots,z_{m}\right)\in\R^{md}$
where $z_{j}\in\Rd$, and we define the linear map 
\[
\Phi\left(z\right)=\smj z_{j},
\]
which is Lipschitz continuous with a Lipschitz constant $\sqrt{m}$.
With this notation we defined the auxiliary system $\dot{z}=\e h\left(t,z,u\right),\ z\left(0\right)=\left(x_{0},\mathbf{0},\dots,\mathbf{0}\right)$
by 
\begin{equation}
\dot{z}=\e\left[\begin{array}{c}
\dot{z}_{1}\\
\dot{z}_{2}\\
\vdots\\
\dot{z}_{m}
\end{array}\right]=\e\left[\begin{array}{c}
g_{1}\left(t,z_{1}+\cdots+z_{m},u\right)\\
g_{2}\left(t,z_{1}+\cdots+z_{m},u\right)\\
\vdots\\
g_{m}\left(t,z_{1}+\cdots+z_{m},u\right)
\end{array}\right]=\e\left[\begin{array}{c}
g_{1}\left(t,\Phi\left(z\right),u\right)\\
g_{2}\left(t,\Phi\left(z\right),u\right)\\
\vdots\\
g_{m}\left(t,\Phi\left(z\right),u\right)
\end{array}\right]=\e h\left(t,z,u\right).\label{eq:decop_eq}
\end{equation}

This system is constructed so that $g\left(t,\Phi\left(z\right),u\right)=\Phi\left(h\left(t,z,u\right)\right)$,
which also holds for the corresponding averaged systems, namely, $\bar{G}\left(\Phi\left(z\right)\right)=\Phi\left(\bar{H}\left(z\right)\right)$,
where 
\begin{equation}
\bar{H}(x)=\lim_{T\rightarrow\infty}\frac{1}{T}\int_{0}^{T}\left\{ h(s,x,u)|u\in U\right\} ds.\label{eq:decop_cont_averaged_eq}
\end{equation}

These equalities imply that if $x^{*}\left(\cdot\right)$ is a solution
of \eqref{eq:multi-freq_cont-1} with control $u^{*}\left(\cdot\right)$,
then applying the same control to the auxiliary system \eqref{eq:decop_eq},
we obtain a solution $z^{*}\left(\cdot\right)$ satisfying 
\[
x^{*}\left(t\right)=\Phi\left(z^{*}\left(t\right)\right)={z^{*}}_{1}\left(t\right)+\cdots+{z^{*}}_{m}\left(t\right),
\]
for every $t\in\ze$; and vice-versa. Thus we conclude that $d_{H}\left(\S G,\Phi\left(\S H\right)\right)=0$,
and similarly that $d_{H}\left(\S{\bar{G}},\Phi\left(\S{\bar{H}}\right)\right)=0$. 

The approximation error of averaging the auxiliary system \eqref{eq:decop_eq}
is bounded as follows.
\begin{lem}
\label{lem:decop_sys}The approximation error of \eqref{eq:decop_eq}
is bounded by 
\[
\e M_{H}\left(1+\frac{3}{2}K_{H}e^{K_{H}}\right)\sum_{j=1}^{m}T_{j},
\]
where $M_{H}=\sqrt{\smj M_{j}^{2}}$ and $K_{H}=\sqrt{m\smj K_{j}^{2}}$.\end{lem}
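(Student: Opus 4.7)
The plan is to recognize the auxiliary system \eqref{eq:decop_eq} as a multi-frequency vector-valued differential inclusion to which Corollary \ref{cor:multi_freq_dilute} applies. First I would pass to the associated inclusion $\dot z\in\e H(t,z)$ with $H(t,z)=\{h(t,z,u)\mid u\in U\}\subset\R^{md}$, and partition its coordinates into $m$ blocks of size $d$. The $j$-th block of $H$ equals $\{g_j(t,\Phi(z),u)\mid u\in U\}$, which by the periodicity of $g_j$ is periodic in $t$ with period $T_j$. Thus the $md$ coordinates of $H$ have periods lying in the set $\{T_1,\dots,T_m\}$ of $N=m$ distinct values, placing us exactly in the setting of Corollary \ref{cor:multi_freq_dilute}.

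Before invoking the corollary I would verify Assumption \ref{ass:Averaging condition} for $H$ with the effective constants $M_H$ and $K_H$. For the norm bound,
\[
|h(t,z,u)|^2=\sum_{j=1}^m|g_j(t,\Phi(z),u)|^2\le\sum_{j=1}^m M_j^2=M_H^2.
\]
For the Lipschitz constant in $z$, using that $\Phi$ is Lipschitz with constant $\sqrt m$ and that each $g_j$ is $K_j$-Lipschitz in its spatial argument,
\[
|h(t,z,u)-h(t,z',u)|^2=\sum_{j=1}^m|g_j(t,\Phi(z),u)-g_j(t,\Phi(z'),u)|^2\le\Bigl(\sum_{j=1}^m K_j^2\Bigr)|\Phi(z)-\Phi(z')|^2\le m\Bigl(\sum_{j=1}^m K_j^2\Bigr)|z-z'|^2=K_H^2|z-z'|^2.
\]
Taking the supremum over $u\in U$ in the Hausdorff metric transfers both bounds to the set-valued mapping $H$. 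Measurability in $t$ is inherited from the $g_j$, existence of the average follows from periodicity together with boundedness, and convex-valuedness of $H$, should it fail, can be waived by Remark \ref{rem: non-cnv}.

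Finally, applying Corollary \ref{cor:multi_freq_dilute} to $H$ with the effective constants $M=M_H$ and $K=K_H$ and the $N=m$ distinct periods $T_1,\dots,T_m$ yields the approximation-error bound
\[
\e M_H\Bigl(1+\frac{3}{2}K_He^{K_H}\Bigr)\sum_{j=1}^m T_j,
\]
which is the claim. I expect the only delicate step to be the Lipschitz computation for $h$, where the factor $\sqrt m$ from $\Phi$ is precisely what produces the $m$ inside the square root defining $K_H$; the rest is a bookkeeping repackaging of \eqref{eq:decop_eq} as a multi-frequency vector-valued inclusion in $\R^{md}$.
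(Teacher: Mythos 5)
Your proposal is correct and follows essentially the same route as the paper: verify that $h$ is bounded by $M_{H}$ and Lipschitz in $z$ with constant $K_{H}$ (the factor $\sqrt{m}$ coming from $\Phi$), then apply Corollary \ref{cor:multi_freq_dilute} to the block-periodic system in $\R^{md}$ with the $N=m$ periods $T_{1},\dots,T_{m}$. Your coordinate-wise computation of $M_{H}$ and $K_{H}$ is just a slightly more explicit version of the paper's argument, so there is nothing further to add.
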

\begin{proof}
It is clear that the function $h\left(t,z,u\right)$ is bounded in
norm by $M_{H}$. To estimate its Lipschitz condition we observe that
for arbitrary $z^{1},z^{2}\in\D$, 
\[
\left|g_{j}\left(t,\Phi\left(z^{1}\right),u\right)-g_{j}\left(t,\Phi\left(z^{2}\right),u\right)\right|\le K_{j}\left|\Phi\left(z^{1}\right)-\Phi\left(z^{2}\right)\right|\le\sqrt{m}K_{j}\left|z^{1}-z^{2}\right|.
\]
 Thus $K_{H}$ is a Lipschitz constant of $h\left(t,z,u\right)$,
and the lemma follows from Corollary \ref{cor:multi_freq_dilute}.
\end{proof}
We are now ready to prove the main result of this section.
\begin{proof}
[Proof of Theorem \ref{thm:multi_freq_cnt}] The triangle inequality
bounds $d_{H}\left(\S G,\S{\bar{G}}\right)$ by 
\[
d_{H}\left(\S G,\Phi\left(\S H\right)\right)+d_{H}\left(\Phi\left(\S H\right),\Phi\left(\S{\bar{H}}\right)\right)
\]
\[
+d_{H}\left(\Phi\left(\S{\bar{H}}\right),\S{\bar{G}}\right).\hspace{1em}\hspace{1em}\hspace{1em}\hspace{1em}\hspace{1em}\hspace{1em}
\]
While the first and third terms above equal zero, the second term
is bounded using the Lipschitz constant of $\Phi\left(\cdot\right)$
and Lemma \ref{lem:decop_sys} by
\begin{eqnarray*}
\\
d_{H}\left(\Phi\left(\S H\right),\Phi\left(\S{\bar{H}}\right)\right) & \le & \sqrt{m}d_{H}\left(\S H,\S{\bar{H}}\right)\\
 & \le & \e\sqrt{m}M_{H}\left(1+\frac{3}{2}K_{H}e^{K_{H}}\right)\sum_{j=1}^{m}T_{j}.
\end{eqnarray*}

\end{proof}
The latter theorem can be extended in a similar manner to Corollary
\ref{cor:per_full_gen} when each entry of $g_{j}\left(t,x,u\right)$
has a different period.
\begin{thm}
\label{thm:cnt_elem_per}Suppose $g(t,x,u)=g_{1}\left(t,x,u\right)+g_{2}\left(t,x,u\right)+\cdots+g_{m}\left(t,x,u\right)$
satisfies conditions 1-3 of Assumption \ref{ass:control}, and for
every $\jm$ the $i$'th entry of $g_{j}\left(t,x,u\right)$ is periodic
in $t$ with period $T_{j,i}$. If 
\[
\left\{ T_{i,j}|\jm,i=1,\dots,d\right\} \subset\left\{ T_{1},\dots,T_{N}\right\} 
\]
then the approximation error of \eqref{eq:multi-freq_cont-1} is $\e\sqrt{m}M_{H}\left(1+\frac{3}{2}K_{H}e^{K_{H}}\right)\sum_{j=1}^{N}T_{j}$,
where $M_{H}=\sqrt{\smj M_{j}^{2}}$ and $K_{H}=\sqrt{m\smj K_{j}^{2}}$.
In particular, it is of order $O\left(\e\right)$.
\end{thm}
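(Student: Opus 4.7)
The plan is to follow the proof of Theorem \ref{thm:multi_freq_cnt} line by line, replacing only the averaging bound applied to the decoupled auxiliary system.

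First I would reuse the construction \eqref{eq:decop_eq} verbatim: define $h(t,z,u)=(g_1(t,\Phi(z),u),\ldots,g_m(t,\Phi(z),u))$ on $\R^{md}$ and the set-valued mapping $H(t,z)=\{h(t,z,u):u\in U\}$. The identities $g(t,\Phi(z),u)=\Phi(h(t,z,u))$ and $\bar G(\Phi(z))=\Phi(\bar H(z))$ hold exactly as before, so $d_H(\S G,\Phi(\S H))=0$ and $d_H(\S{\bar G},\Phi(\S{\bar H}))=0$. The bound $|h|\le M_H$ and the Lipschitz constant $K_H$ for $h$ in $z$ derived in Lemma \ref{lem:decop_sys} use only conditions 1--3 of Assumption \ref{ass:control}, so they remain valid under the hypotheses of the present theorem.

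The key new step is the averaging estimate for $\dot z\in\e H(t,z)$. After decoupling, the scalar entry of $h$ at position $(j,i)$ is $g_{j,i}(t,\Phi(z),u)$, which is periodic in $t$ with period $T_{j,i}$. Viewing $H$ as a set-valued mapping into $\R^{md}$ whose $md$ scalar entries are indexed by the pairs $(j,i)$, each entry is periodic with its own period. Hence Corollary \ref{cor:multi_frec_entry_diffinc} applies; since moreover $\{T_{j,i}\}\subset\{T_1,\ldots,T_N\}$, its dilute strengthening Corollary \ref{cor:multi_freq_dilute} (in the entry-wise form, taking the ambient dimension to be $md$) yields
\[
d_H\bigl(\S H,\S{\bar H}\bigr)\le \e M_H\Bigl(1+\frac{3}{2}K_H e^{K_H}\Bigr)\sum_{j=1}^{N}T_j.
\]

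The final step is the same triangle inequality computation that closes the proof of Theorem \ref{thm:multi_freq_cnt}: bound $d_H(\S G,\S{\bar G})$ by $d_H(\S G,\Phi(\S H))+d_H(\Phi(\S H),\Phi(\S{\bar H}))+d_H(\Phi(\S{\bar H}),\S{\bar G})$, drop the outer two terms, and apply the Lipschitz constant $\sqrt m$ of $\Phi$ to the middle term to pick up the factor $\sqrt m$ that appears in the statement. The only conceptual point, and the nearest thing to an obstacle, is recognizing that the proper substitute for the sum-form application of Corollary \ref{cor:multi_freq_dilute} used in Theorem \ref{thm:multi_freq_cnt} is its entry-wise form applied in the higher-dimensional space $\R^{md}$; once this is noticed, no further estimate is needed and the constants match those in the statement.
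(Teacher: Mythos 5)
Your proposal is correct and is essentially the paper's intended argument: the paper gives no separate proof of Theorem \ref{thm:cnt_elem_per}, only the remark that Theorem \ref{thm:multi_freq_cnt} ``extends in a similar manner'' to Corollary \ref{cor:per_full_gen}, and your route --- reuse the decoupling \eqref{eq:decop_eq} with $\Phi$, note that each scalar entry $(j,i)$ of $h$ is periodic with period $T_{j,i}\in\{T_{1},\dots,T_{N}\}$, apply the entry-wise form of Corollary \ref{cor:multi_freq_dilute} in $\R^{md}$ with the constants $M_{H},K_{H}$ of Lemma \ref{lem:decop_sys}, and finish with the triangle inequality and the Lipschitz constant $\sqrt{m}$ of $\Phi$ --- is exactly that. (Only a cosmetic point: Lemma \ref{lem:decop_sys} already invokes the entry/block-wise branch of Corollary \ref{cor:multi_freq_dilute} rather than the sum form, since $H$ is not a Minkowski sum; your refinement is just to index the entries by the pairs $(j,i)$ instead of by the blocks $j$.)
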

The following is an application of our results.
\begin{example}
Consider the control system given by
\[
\dot{x}=\e g\left(t,x,u\right)=\e x+\e u\left(\cos\left(2\pi t\right)+\cos\left(2t\right)\right),\ x\left(0\right)=0.
\]
where $U=\left[-1,1\right]$. The averaged equation in this case can
be expressed, replacing the time average by a space average, by 
\begin{eqnarray*}
\dot{y}\in\e\bar{G}\left(y\right) & = & \left\{ \e y+\frac{1}{\left(2\pi\right)^{2}}\int_{\left[0,2\pi\right]^{2}}\e u\left(\phi_{1},\phi_{2}\right)\left(\cos\left(\phi_{1}\right)+\cos\left(\phi_{2}\right)\right)d\left(\phi_{1},\phi_{2}\right)|u:\left[0,2\pi\right]^{2}\rightarrow\left[-1,1\right]\right\} .
\end{eqnarray*}
The set $\overline{G}\left(y\right)$ is convex and by symmetry we
conclude that $\overline{G}\left(y\right)=\left[y-\alpha,y+\alpha\right]$,
where
\[
\alpha=\frac{1}{\left(2\pi\right)^{2}}\int_{\left[0,2\pi\right]^{2}}\left|\cos\left(\phi_{1}\right)+\cos\left(\phi_{2}\right)\right|d\phi\approx0.815
\]
was computed analytically. Notice that this does not agree with the
naive time averaging of the vector field which yields the function
$\bar{g}\left(x,u\right)=x$.

So in the the domain $\D=\left[-2,2\right]$ we have that $M_{H}=\sqrt{10}$,
$K_{H}=\sqrt{2}$ (by setting $g_{1}\left(t,x,u\right)=x+u\cos\left(2\pi t\right)$
and $g_{2}\left(t,x,u\right)=u\cos\left(2t\right)$) and our theorem
implies that the estimation error is bounded by 
\[
\e\sqrt{20}\left(1+\frac{3}{2}\sqrt{2}e^{\sqrt{2}}\right)\left(1+\pi^{-1}\right).
\]
\end{example}
\begin{acknowledgement*}
The author wishes to thank Zvi Artstein for his insightful remarks.
\end{acknowledgement*}
\bibliographystyle{plain}
\bibliography{bib}

\end{document}